\newcommand\reallywidehat[1]{%
\savestack{\tmpbox}{\stretchto{%
  \scaleto{%
    \scalerel*[\widthof{\ensuremath{#1}}]{\kern-.6pt\bigwedge\kern-.6pt}%
    {\rule[-\textheight/2]{1ex}{\textheight}}
  }{\textheight}%
}{0.5ex}}%
\stackon[1pt]{#1}{\tmpbox}%
}
\newcommand{\er}{\mbox{\rm erf}}
\newcommand{\bx}{\boldsymbol{x}}
\newcommand{\bn}{\boldsymbol{n}}
\newtheorem{theorem}{Theorem}[section]
\newtheorem{lemma}[theorem]{Lemma}
\theoremstyle{definition}
\newtheorem{mydef}[theorem]{Definition}
\theoremstyle{remark}
\newtheorem{remark}{Remark}
\title{A new hybrid integral representation for frequency domain scattering
  in layered media} 
\numberwithin{equation}{section}
\author{Jun Lai\footnote{Courant Institute, New York University, New
    York, NY. Email: {\tt lai@cims.nyu.edu}}, \,  
Leslie Greengard\footnote{Courant Institute, New York University, New
  York, NY and Simons Foundation, New York, NY. Email:
  {\tt greengard@cims.nyu.edu}}, \, and 
Michael O'Neil\footnote{Courant Institute and School of
  Engineering,  New York University, New
    York, NY. Email: {\tt oneil@cims.nyu.edu}}
}
\date{\today}
\begin{document}

\maketitle 

\begin{abstract}
  A variety of problems in acoustic and electromagnetic
  scattering require the evaluation of impedance or layered media
  Green's functions. Given a point source located in an unbounded
  half-space or an infinitely extended layer, Sommerfeld and others
  showed that Fourier analysis combined with contour integration
  provides a systematic and broadly effective approach, leading to
  what is generally referred to as the Sommerfeld integral
  representation.  When either the source or target is at some
  distance from an infinite boundary, the number of degrees of freedom
  needed to resolve the scattering response is very modest.  When both
  are near an interface, however, the Sommerfeld integral involves a
  very large range of integration and its direct application
  becomes unwieldy.  Historically, three schemes have been employed to
  overcome this difficulty: the method of images, contour deformation,
  and asymptotic methods of various kinds. None of these methods make
  use of classical layer potentials in physical space, despite their
  advantages in terms of adaptive resolution
  and high-order accuracy. The reason for this is simple: layer
  potentials are impractical in layered media or half-space
  geometries since they require the discretization of an infinite
  boundary.
  In this paper, we propose a hybrid method which combines layer
  potentials (physical-space) on a finite portion of the interface
  together with a Sommerfeld-type (Fourier) correction.  We prove that
  our method is efficient and rapidly convergent for arbitrarily
  located sources and targets, and show that the scheme is
  particularly effective when solving scattering problems for objects
  which are close to the half-space boundary or even embedded across a
  layered media interface.
\end{abstract}

\onehalfspacing

\section{Introduction}
\label{sec_intro}

Problems of acoustic and electromagnetic wave scattering in half-space
or layered media geometries require the solution of the the governing
partial differential equations subject to suitable boundary and
radiation conditions.  In the two-dimensional, time-harmonic setting,
both reduce to the Helmholtz equation (Figure~\ref{figintro})
\begin{align}\label{helmholtz}
  \Delta u + k^2 u = f, 
\end{align}
with boundary conditions enforced on a scatterer $\Omega_0$ and interface
conditions enforced on the line $y=0$, either of
impedance (Robin) type
\begin{equation}
\label{impedance}
\frac{\partial u}{\partial n} +ik\alpha u = 0, 
 \end{equation}
or of transmission type
\begin{equation}
\label{continuity}
\left[u \right] = 0, \qquad \left[ \frac{\partial u}{\partial n}
  \right] =0.
\end{equation}
The Helmholtz coefficient $k$ is given as $k=\omega/c$, where $\omega$
is the governing frequency and $c$ is the wave speed in the medium.
For the sake of simplicity, we will assume that on the scatterer
$\Omega_0$ the total field $u$ satisfies homogeneous Dirichlet
boundary conditions
\[
u = 0 |_{\partial \Omega_0} .
\]

\begin{figure}[!t]
  \centering
  \includegraphics[width=.8\linewidth]{./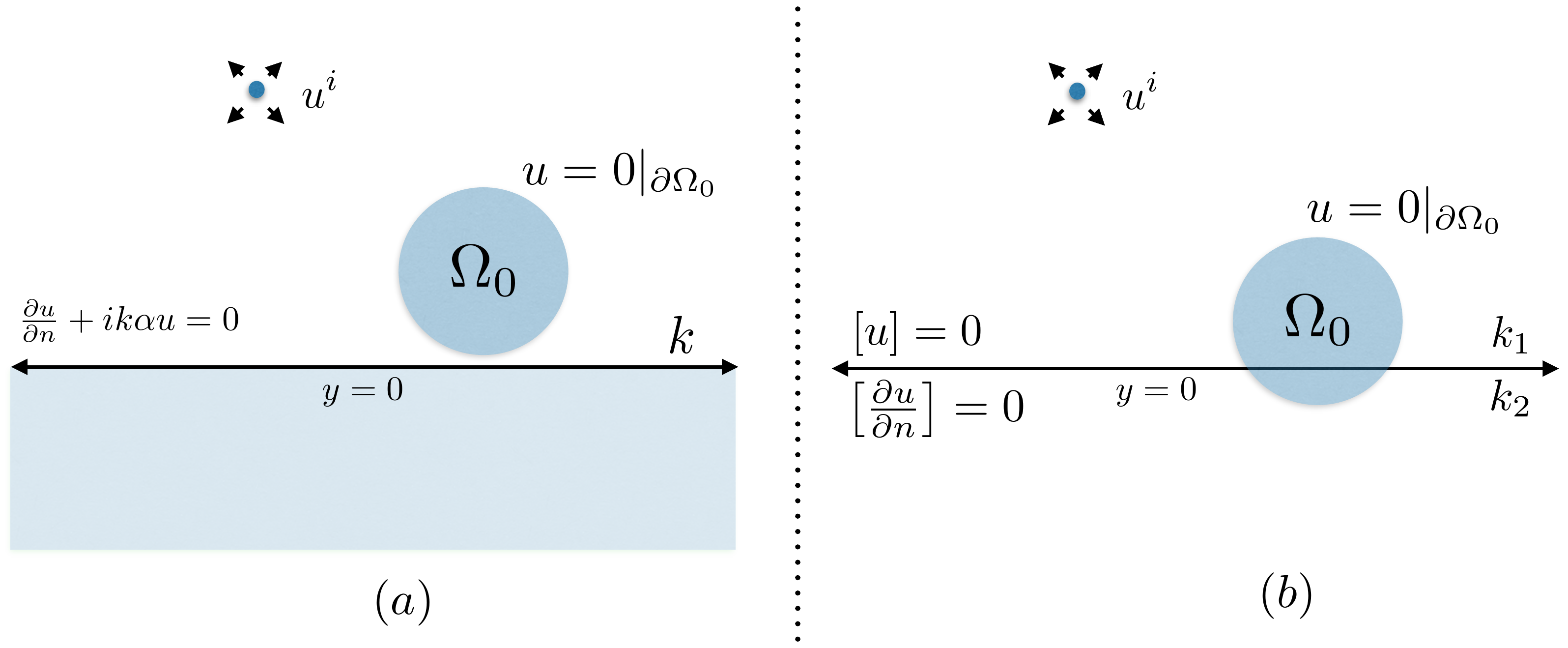}
  \caption{(a) Scattering in the presence of an impedance half-space,
    with a point source defining the incoming field and a sound soft
    scatterer $\Omega_0$. In (b), the impedance boundary is replaced
    by two-layer media, with a distinct Helmholtz parameter in the
    lower half-space. The scatter $\Omega_0$ is partially buried and
    touches both media.
  \label{figintro}}
\end{figure}

In electromagnetics, this condition corresponds to the case of
scattering from a perfectly conducting obstacle in transverse-magnetic
(TM) polarization, and in acoustics to the case of a sound-soft
obstacle.  Here and in what follows, $\bn=(0,1)$ is the unit normal on
the line $y=0$, $\frac{\partial u}{\partial n}$ denotes the partial
derivative of $u$ in the normal direction, and $\alpha$ is an
impedance constant~\cite{chandler-wilde} with $\Re(\alpha)\ge 0$.  The
expression $[f]$ denotes the jump in the function $f$ across the line
$y=0$, which we will denote by $\Gamma$ in the remainder of the paper.
We will denote by $u^i$ the incoming field induced by the sources $f$
in (\ref{helmholtz}). We will limit our attention, without loss of
generality, to either point sources or plane waves.  To ensure
uniqueness of the boundary value problem, a radiation condition must
be imposed to enforce that the scattered field is decaying.  Thus, we
assume that the total field $u$ is written in the form $u = u^i +
u^s$, where the scattered field $u^s$ satisfies the Sommerfeld
radiation condition \cite{Cot2}:
\begin{equation}\label{rad0}
\lim_{r\rightarrow \infty}\sqrt{r}\left( \frac{\partial u^s}{\partial
  r}-iku^s\right)=0.
\end{equation}
We will also assume that the Helmholtz parameter $k$ is constant in
either the upper or lower half-space, with $\Re(k)>0$ and $\Im(k) \ge
0$.  Some applications require variants of the interface conditions
above, such as
\begin{equation}
\left[\gamma u \right] = f, \qquad \left[ \beta \, \frac{\partial
    u}{\partial n} \right] =g,
\end{equation}
where $\gamma, \beta$ are piecewise constant material
parameters~\cite{chew,lindell1, lindell2, lindell3}.  The method of
this paper extends to these cases in a straightforward manner.

Integral equations are natural candidates for solving the problems
described above since they discretize the scatterer alone and
impose the Sommerfeld radiation condition by construction.  In order
to make effective use of this approach, however, one must generally
evaluate the governing Green's function which satisfies the
homogeneous interface conditions (\ref{impedance}) or
(\ref{continuity}). This avoids the need to discretize the interface
$\Gamma$ (the infinite line $y=0$), and the most common treatment,
using Fourier analysis, was pioneered by Sommerfeld, Weyl and
Van~der~Pol~\cite{sommerfeld,weyl,vanderpol}.  Over the past several
decades, a number of methods have been developed, based on this
approach.  These include using ideas from high-frequency asymptotics,
rational approximation, contour deformation
~\cite{cai,caiyu,okhmatovski-2014,paulus_2000,Perez-Arancibia2014}, 
complex images~\cite{thomson-1975,ochmann,taraldsen}, and methods based on
special functions \cite{koh} or physical images \cite{oneil-imped}.

Without reviewing the methods listed above in detail, we simply note
that all of them are aimed at the efficient pointwise evaluation of
the impedance or layered media Green's function, rather than treating
the scattering problem itself in a more unified fashion.  Here, we
consider a substantially different approach, motivated by the fact
that close-to-touching interactions between compactly supported
scatterers are easily accounted for using standard {\em physical
space} layer potentials, which can be adaptively refined to cope
with the near singularities induced by the geometry (see, for example,
\cite{greengard_helsing,hao_barnett,helsing_1996}).  Potential theory
cannot be used naively in layered media, however, because of the
infinite extent of the interface. In principle, however, it seems
plausible that the failure of rapid convergence of the Sommerfeld
integral is due entirely to the close-to-touching interaction. This
turns out to be the case, and we present a rigorous, hybrid method for
scattering problems that combines the best features of layer
potentials (adaptivity with high order convergence) and of the
Sommerfeld representation (spectral accuracy for smooth functions).
Put differently, layer potentials in physical space will be used to
capture features of the solution with high-frequency components in the
Fourier domain, and the Sommerfeld integral will be responsible only
for the remaining low-frequency components. For this we solve a local
integral equation and apply a smooth window function to the solution
in order to capture the most singular behavior.

\begin{remark}
It is worth emphasizing that window functions have been used previously
to accelerate the evaluation of layered media Green's functions
~\cite{cai,caiyu,paulus_2000,Perez-Arancibia2014}.
The approach in those papers, however, is based on using carefully chosen 
partitions of unity and asymptotic analysis to evaluate 
slowly decaying and oscillatory Sommerfeld integrals for each source and
target. We are using a partition of unity in {\em physical} space to enforce 
rapid decay in a single Sommerfeld-type correction that can be used for all 
target points.

Bruno and collaborators \cite{bruno_lecture_2015,bruno_window_2015} 
have independently developed a method
that makes use of the same underlying intuition. They also 
propose using a window function regularizing the integral operator
in physical space in order to handle the most complicated features of the 
scattering problem.  Unlike our scheme,
asymptotic methods and stationary phase arguments are used to approximate 
the solution of the integral equation on the entire interface.
Their windowing scheme 
could be adapted for use in place of our local integral 
equation (described below), with the potential for eliminating the artificial
endpoint singularities introduced in our scheme. This
would reduce the complexity of the implementation  \cite{Bruno_personal}. 
\end{remark}

An outline of the paper is as follows: 
in Section~\ref{phys_spec}, we review some basic properties of the 
free-space Green's function, layer potentials, and their spectral 
representation.
In Sections~\ref{sec_impedance} and~\ref{sec_layered}, 
we focus on the construction of a {\em local} integral equation
and describe our hybrid representation  in more detail.
We then prove, in Section~\ref{sec_wellposed}, 
that our local integral equation is well-posed.
In Section~\ref{sec_sommerfeld}, we prove decay estimates on the 
integrand of our Sommerfeld correction.
Section~\ref{sec_numerical} contains some illustrative examples 
for both pointwise evaluation of the Green's function and for scattering 
computations in the presence of obstacles. 
Section~\ref{sec_conclusions} contains some brief, concluding remarks.

\section{Spectral representation of the Green's function} \label{phys_spec}

For $k \in \mathbb C$ with non-negative imaginary part,
it is well-known that the
Green's function for the free-space Helmholtz
equation~\eqref{helmholtz} is the zeroth order Hankel function of the
first kind:
\begin{equation}
G_k(\bx,\bx_0) = \frac{i}{4}H_0^{(1)}(k|\bx-\bx_0|).
\end{equation} 
It satisfies 
\begin{equation}\label{eq_helm}
(\Delta + k^2) G_k(\bx,\bx_0) = \delta(\bx - \bx_0),
\end{equation}
where $\delta(\bx-\bx_0)$ represents the delta function at $\bx_0$.
The Sommerfeld integral representation for $H_0^{(1)}$ is
obtained by taking the Fourier transform of equation~(\ref{eq_helm})
with Fourier coordinates $(\lambda_x,\lambda_y)$, and evaluating the
integral in $\lambda_y$ via contour integration
\cite{sommerfeld,weyl,vanderpol}. This yields
\begin{equation}\label{eq_sommerfeld}
  G_k(\bx,\bx_0) = \frac{1}{4\pi} \int_{-\infty}^\infty
  \frac{e^{-\sqrt{\lambda^2-k^2}|y-y_0|}}{\sqrt{\lambda^2-k^2}} \, 
e^{i\lambda (x-x_0)} \, d\lambda.
\end{equation}
Since $\lambda_x$ is the only Fourier variable remaining after contour
integration, we denote it simply by~$\lambda$.

The formula (\ref{eq_sommerfeld}) plays a central role in scattering theory,
and numerous schemes are available that yield high order accuracy upon
discretization. The simplest, perhaps, is contour deformation to avoid 
the square root singularity in the integrand (see for example, 
\cite{barnett_greengard,chew}).
We do not review the literature here, since we are primarily concerned with 
the range of integration required in the 
Fourier integral parameter $\lambda$. The relevant considerations are
most easily understood in the context of computing the Green's function
for a perfectly conducting or sound-soft half-space, where we seek to 
impose the Dirichlet condition $u=0$ on the interface~$\Gamma$.

The obvious solution, of course, is to construct the corresponding
Green's function, which we denote by $G^0_k$, using the method of
images. Assuming $\bx_0 = (x_0,y_0)$ with $y_0>0$, let its reflected
image $\bx^R_0 = (x_0,-y_0)$. It is then easy to verify that
\begin{equation}
\label{Gk0def}
G^0_k(\bx,\bx_0) = \frac{i}{4}H_0^{(1)}(k|\bx-\bx_0|) - 
\frac{i}{4}H_0^{(1)}(k|\bx-\bx^R_0|).
\end{equation} 
Using the Sommerfeld integral, we may instead write 
\begin{equation}
\label{Gk0Sommerfeld}
G^0_k(\bx,\bx_0) = \frac{i}{4}H_0^{(1)}(k|\bx-\bx_0|) - 
  \frac{1}{4\pi} \int_{-\infty}^\infty
  \frac{e^{-\sqrt{\lambda^2-k^2}|y+y_0|}}{\sqrt{\lambda^2-k^2}} \, 
e^{i\lambda (x-x_0)} \, d\lambda.
\end{equation} 
While this is more complicated than (\ref{Gk0def}), the analogous
approach can be used even when the boundary or interface condition
does not support a simple image representation, as we shall see below.
For the moment, we simply wish to observe that when $|y+y_0|$ is
large, the integrand in (\ref{Gk0Sommerfeld}) is rapidly decaying,
once $\Re(\lambda^2 - k^2) >0$. When $|y+y_0|$ is small, however, the
decay is negligible and the range required in $\lambda$ is large. In
this regime, the Sommerfeld representation is extremely inefficient
unless various asymptotic or contour deformation methods are employed.

A third approach to imposing the homogeneous Dirichlet interface
condition $u=0$ on $\Gamma$ is to write $u(\bx) =
\frac{i}{4}H_0^{(1)}(k|\bx-\bx_0|)+ u^s(\bx)$, with the scattered field
$u^s$ represented as a double layer potential with unknown density
$\sigma$:
\begin{equation}
    u^s(\bx) = D_\Gamma^k[\sigma](\bx) 
    \equiv \int_\Gamma \left[ \frac{\partial}{\partial n'}
  G_k(\bx,\bx')
  \right] \, \sigma(\bx') \, ds(\bx').
\end{equation}
Here $s$ denotes arclength along $\Gamma$, and $\partial/\partial n'$
denotes differentiation in the outward normal direction at the point
$\bx'$. Substituting the representation of $u^s$ into the boundary
condition $u=0$ yields a second-kind integral equation:
\begin{equation}\label{eq_doublef}
\frac{\sigma}{2} + D_\Gamma^{k \ast}[\sigma] = - u^i,
\end{equation}
where $D^{k \ast}_\Gamma$ denotes the principal value of $D^k_\Gamma$.
On a half-space, however, it is well-known that $D^{k \ast}_\Gamma$
vanishes \cite{guenther_lee}, so that $\sigma = -2u^i$ and
\[
u^s(\bx) = D_\Gamma^k\left[ -2u^i \right](\bx) \, .
\]
This is not useful in practice, because $\Gamma = (-\infty,\infty)$
and $u^i$ is slowly decaying in space, so that the range of
integration is extremely large (see~Figure~\ref{figdirdensity}). Note,
however, that when $y_0$ is small, $u^i$ is nearly singular only at
the close to touching point in physical space. This accounts for the
slow convergence of the integrand in the Sommerfeld integral,
$\frac{e^{-\sqrt{\lambda^2-k^2}y_0}}{\sqrt{\lambda^2-k^2}} \,
e^{-i\lambda x_0}$, which is simply the Fourier transform of $u^i$
along the line $\Gamma$.

\begin{figure}[t]
  \centering
  \includegraphics[width=.8\linewidth]{./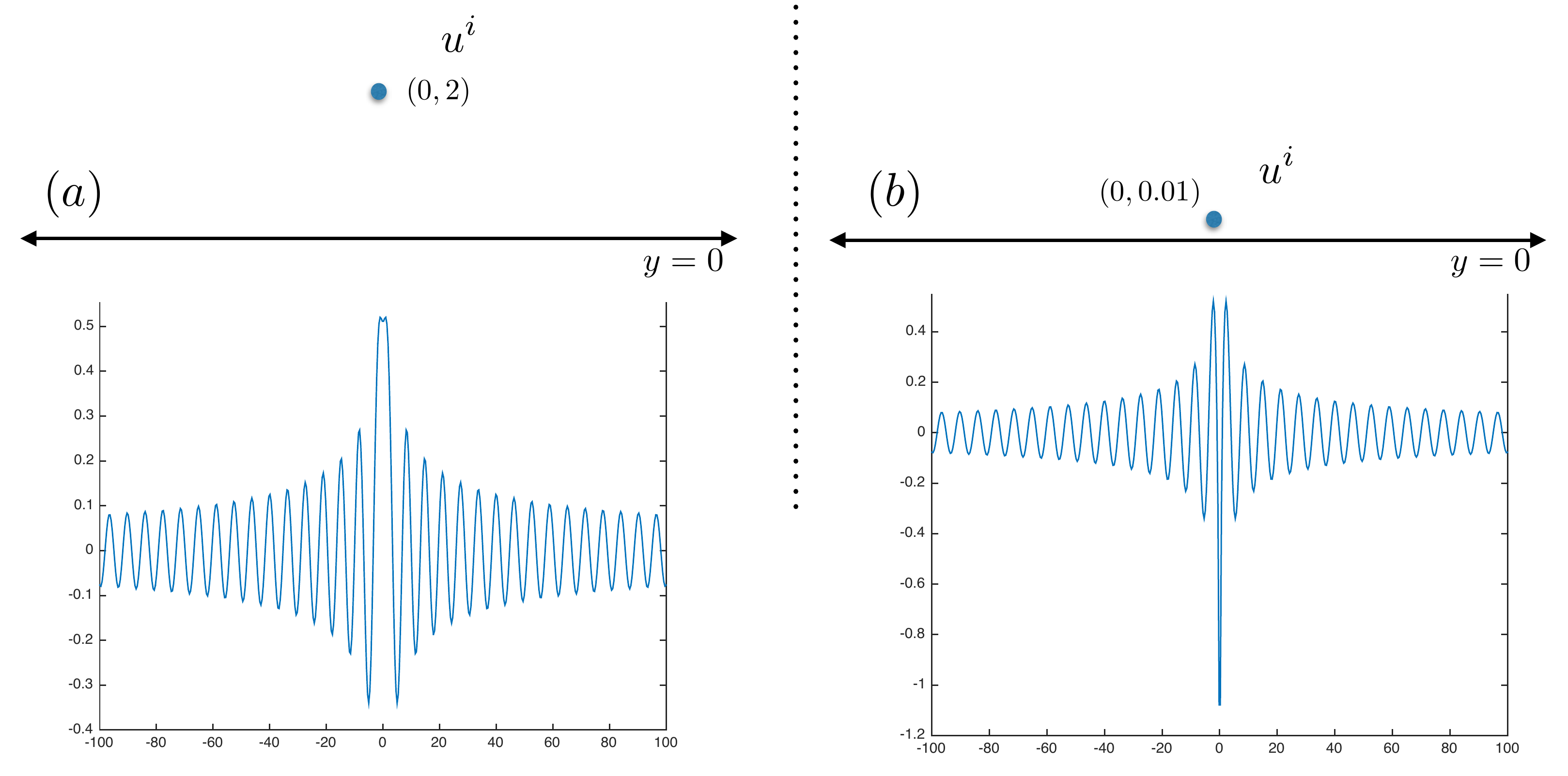}
  \caption{(a) For a point source at $(0,2)$ with $k=1$, we plot the
    imaginary part of the function $u^i$ on $\Gamma$.  The response is
    smooth, oscillates over the indicated range $(-100,100)$, and
    decays slowly (like $1/\sqrt{x}$). (b) When the point source is
    near $\Gamma$, the oscillations and slow decay are still present,
    but the function is nearly singular at the close-to-touching
    point.
\label{figdirdensity}}
\end{figure}

Our hybrid approach is based on the following premise: that the poor
convergence of the Sommerfeld integral is due entirely to the near
singularity in the layer potential density \mbox{($\sigma = -2 u^i$)}
at the close-to-touching point. Thus, we partition $u^i$ into a local
part, for which we may make effective use of layer potentials, and a
remainder, for which we can effectively use the Sommerfeld
representation (see Figure~\ref{figdirwindow}).

\begin{figure}[t]
  \centering
  \includegraphics[width=.8\linewidth]{./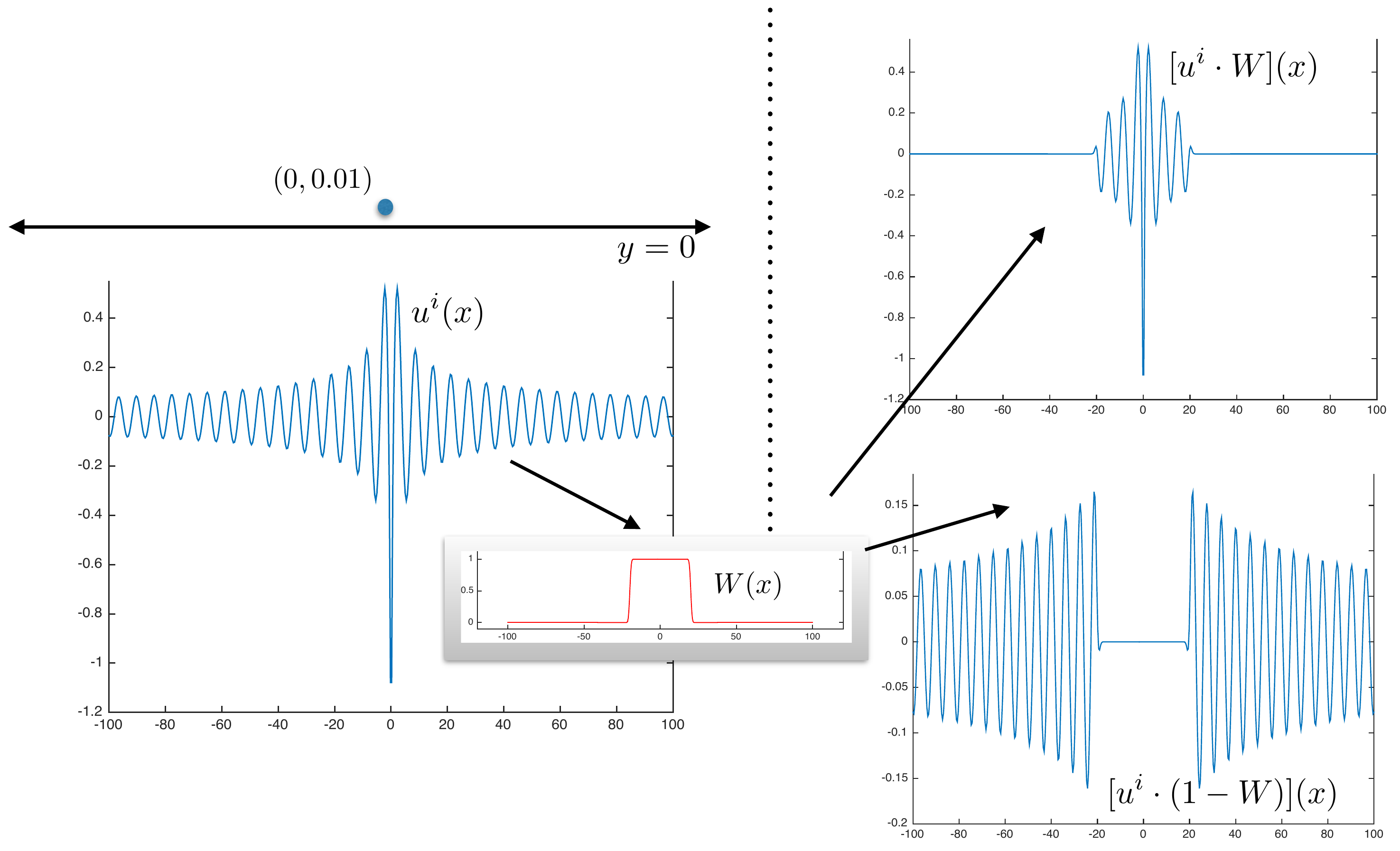}
  \caption{For a point source near the boundary, the double layer
    density $2u^i$ is nearly singular on $\Gamma$. We use a smooth
    window function (see text) to capture the nearly singular part in
    physical space (top right), and the Sommerfeld integral to account
    for the remainder (bottom right).
\label{figdirwindow}}
\end{figure}

More precisely, let us assume we have a window function $W(x)\in
C^{\infty}(\mathbb{R})$, supported on a finite section of the
interface, $\Gamma_0 = (-M_0,M_0)$, that satisfies
\begin{equation}\label{bump}
W(x)=
\begin{cases}
0 \quad \mbox{ for } x\le -3/4M_0,  \\
1 \quad \mbox{ for } -1/4M_0 \le x\le 1/4M_0, \\
0 \quad \mbox{ for } x\ge 3/4M_0.
\end{cases}
\end{equation} 
We can construct such a window function from a compactly supported
$C^\infty$ function, such as the standard {\em bump} function: 
\[ \Psi(x) = \left\{ \begin{array}{rl}
e^{-\frac{1}{1-x^2}} & \mbox{for $|x| < 1$}; \\
0 & \mbox{otherwise}.
\end{array} \right.
\]
For this, we let 
$\Phi(x) = \frac{1}{A} \int_0^x \Psi(t) \, dt$, where
$A = \int_{0}^1 \Psi(t) \, dt$, and define
\begin{equation}\label{cinfbump}
  W(x) = W_{M_0}(x) =  \frac{1}{2} \left(\Phi(x + M_0/2)
  - \Phi(x - M_0/2)\right) \, .
\end{equation}
It is straightforward to verify that, once $M_0 > 4$, $W(x)$ satisfies
the desired conditions. We use notations $W$ and $W_{M_0}$
interchangeably, depending on the particular relevance of the
parameter~$M_0$.

\begin{remark}
In practice, we use 
\begin{equation}\label{err}
\tilde{W}(x) = 
\tilde{W}_{M_0}(x) =  1/2\left(\er(x+M_0/2) - \er(x-M_0/2)\right) 
\end{equation}
where $\er(x)$ is the error function 
\begin{equation}
\er(x) = \frac{2}{\sqrt{\pi}}\int_{0}^x e^{-t^2} \, dt.
\end{equation}
When $M_0 = 28$, $\tilde{W}$ in \eqref{err} satisfies
\eqref{bump} with more than fourteen digits of accuracy.
For smaller windows (smaller values of 
$M_0$), $\tilde{W}$ can simply be rescaled. The smaller the window,
however, the less rapid the 
decay of its Fourier transform. 
\end{remark}

Letting $\sigma_W(x) = W(x) \cdot \sigma(x) = W_{M_0}(x) \cdot
(-2u^i(x))$, we seek to represent the scattered field in the form
\begin{equation} \label{hybriddir}
u^s(\bx) = 
    \int_{\Gamma_0} \left[ \frac{\partial}{\partial n'}
  G_k(\bx,\bx')
  \right] \, \sigma_W(\bx') \, ds(\bx')
+   
  \frac{1}{4\pi} \int_{-\infty}^\infty
  e^{-\sqrt{\lambda^2-k^2}y} \, 
e^{i\lambda x} \, \hat\xi_W(\lambda) \, d\lambda.
\end{equation}
Using the decomposition $\sigma = \sigma_W + (1-W) \sigma$, 
it is straightforward to verify that 
\begin{equation}
 \hat\xi_W(\lambda) = -2 \, \widehat{(1 -W)} \ast \left(
 \frac{e^{-\sqrt{\lambda^2-k^2}y_0}}{\sqrt{\lambda^2-k^2}}
 e^{-i\lambda x_0} \right),
\label{sommerfeld_cor}
\end{equation}
where $\widehat{f}(\lambda)$ denotes the Fourier transform of $f(x)$:
\[
\widehat{f}(\lambda) = \int_{-\infty}^{\infty} f(x) \, e^{-i\lambda x} \,
dx.
\]
That is, $u^s$ is represented as a double layer potential over a
finite region $\Gamma_0$ plus a Sommerfeld correction with density
$\hat\xi_W$, defined in (\ref{sommerfeld_cor}). It turns out that
$\hat\xi_W$ is rapidly decaying as a function of $\lambda$, as shown
in Theorem \ref{dirthm} below.  In subsequent sections, we show how
this hybrid representation can be applied to the cases of interest,
rather than merely the Dirichlet problem where one would (of course) use the
simple image solution in practice.

\begin{lemma}\label{lema1}
The Fourier transform of the window function $W_{M_0}$ in 
(\ref{err}) decays superalgebraically.
\end{lemma}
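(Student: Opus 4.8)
The plan is to compute the Fourier transform of the window $\tilde{W}_{M_0}$ defined in (\ref{err}) in closed form, exploiting the fact that it is an antiderivative of a difference of Gaussians. Because $\tilde{W}_{M_0}$ is real-analytic and, by the standard asymptotics of the complementary error function, decays like a Gaussian as $x \to \pm\infty$, every Fourier manipulation below is rigorously justified. In fact this route yields Gaussian decay of the transform, which is considerably stronger than the claimed superalgebraic decay.

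First I would differentiate (\ref{err}). Since $\er'(x) = \frac{2}{\sqrt{\pi}} e^{-x^2}$, this gives
\[
\tilde{W}_{M_0}'(x) = \frac{1}{\sqrt{\pi}}\left( e^{-(x+M_0/2)^2} - e^{-(x-M_0/2)^2}\right),
\]
a difference of two Gaussians centered at $\mp M_0/2$. Each derivative $\tilde{W}_{M_0}^{(n)}$ with $n \ge 1$ is therefore a polynomial multiple of these Gaussians, and in particular is absolutely integrable; together with $\tilde{W}_{M_0} \to 0$ at $\pm\infty$, this is all that is needed to differentiate under the Fourier integral with no boundary contributions. Next I would invoke the elementary pair $\int_{-\infty}^{\infty} e^{-(x-a)^2} e^{-i\lambda x}\,dx = \sqrt{\pi}\, e^{-i\lambda a} e^{-\lambda^2/4}$ together with $\widehat{\tilde{W}_{M_0}'}(\lambda) = i\lambda\, \widehat{\tilde{W}_{M_0}}(\lambda)$. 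Substituting the two shifted Gaussians collapses the modulations into a sine, yielding
\[
i\lambda\, \widehat{\tilde{W}_{M_0}}(\lambda) = 2i\,\sin\!\left(\tfrac{\lambda M_0}{2}\right) e^{-\lambda^2/4},
\qquad\text{so}\qquad
\widehat{\tilde{W}_{M_0}}(\lambda) = \frac{2\,\sin\!\left(\tfrac{\lambda M_0}{2}\right)}{\lambda}\, e^{-\lambda^2/4}
\]
for $\lambda \neq 0$. The apparent pole at the origin is removable, with limiting value $\widehat{\tilde{W}_{M_0}}(0) = M_0$, matching a direct evaluation of $\int \tilde{W}_{M_0}$. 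The bound $|\widehat{\tilde{W}_{M_0}}(\lambda)| \le \frac{2}{|\lambda|} e^{-\lambda^2/4}$ for $|\lambda| \ge 1$ then decays faster than any inverse power of $\lambda$, which proves the lemma.

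Since the computation is short, there is no real obstacle beyond bookkeeping: the only points meriting care are the vanishing of boundary terms (guaranteed by the Gaussian tails just noted) and the removable singularity at $\lambda = 0$. For completeness I would also record that the compactly supported $C^\infty$ window $W_{M_0}$ of (\ref{cinfbump}) satisfies the superalgebraic claim by the general principle rather than an explicit formula: integrating by parts $n$ times gives $|\widehat{W_{M_0}}(\lambda)| \le \|W_{M_0}^{(n)}\|_{L^1} \, |\lambda|^{-n}$ for every $n$, because each $W_{M_0}^{(n)}$ is continuous with compact support; letting $n$ be arbitrary yields decay faster than any polynomial. The sharper Gaussian rate above is special to the erf window, whose defining density is itself Gaussian.
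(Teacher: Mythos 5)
Your proof is correct, and it takes a more explicit route than the paper's. The paper disposes of this lemma in one line: $W$ is $C^\infty$, so repeated integration by parts gives $|\widehat{W}(\lambda)| \le \|W^{(n)}\|_{L^1}|\lambda|^{-n}$ for every $n$ --- exactly the ``general principle'' you relegate to your closing remark. (Applied to the erf window of \eqref{err}, which is smooth but not compactly supported, that soft argument does require the observation you make explicitly, namely that every derivative is a polynomial multiple of shifted Gaussians and hence integrable, and that the boundary terms vanish; the paper glosses over this.) What your computation buys beyond the paper's is substantial: the closed form $\widehat{\tilde{W}}_{M_0}(\lambda) = \frac{2\sin(\lambda M_0/2)}{\lambda}e^{-\lambda^2/4}$ is verifiably correct (I checked the Gaussian transform pair, the collapse of the two modulations into $2i\sin(\lambda M_0/2)$, and the removable singularity with value $M_0 = \int\tilde{W}_{M_0}$), and it shows Gaussian rather than merely superalgebraic decay, while also making the $M_0$-dependence of the transform completely explicit --- which is relevant to the paper's later remark that shrinking the window slows the decay, and to the numerical claim that $M_0=28$ yields fourteen digits. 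The only caveat is that your headline Gaussian rate is special to the erf window; for the compactly supported $C^\infty$ window of \eqref{cinfbump}, which the rest of the paper's analysis nominally uses, only the integration-by-parts bound is available, so your final remark is not optional but is in fact the paper's entire proof.
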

\begin{proof}
This follows immediately from the fact that $W$ is $C^\infty$ and
integration by parts.
\end{proof}

\begin{theorem} \label{dirthm}
Let $u^s$ be the scattered field induced by a point source at $(0,h)$
satisfying the Dirichlet boundary condition $u^s = -u^{i}$. If we
represent $u^s$ in the form (\ref{hybriddir}), then 
$|\hat{\xi}_W(\lambda)|$ decays superalgebraically,
independent of $h$.
\end{theorem}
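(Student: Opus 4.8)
The plan is to work entirely in the Fourier variable and exploit an exact cancellation built into the windowed density. With the source at $(0,h)$ the factor $e^{-i\lambda x_0}$ in (\ref{sommerfeld_cor}) is unity, and reading off the spectral representation (\ref{eq_sommerfeld}) on $\Gamma$ shows that the Fourier transform of $u^i(\cdot,0)$ is
\[
\widehat{u^i}(\lambda)=\tfrac12\,\frac{e^{-\sqrt{\lambda^2-k^2}\,h}}{\sqrt{\lambda^2-k^2}}.
\]
By (\ref{sommerfeld_cor}) and the convolution theorem, $\hat\xi_W=-2\,\widehat{(1-W)\,u^i}$ up to the fixed normalization; since $\widehat{(1-W)}=2\pi\delta-\widehat{W}$, this becomes
\[
\hat\xi_W(\lambda)=-2\,\widehat{u^i}(\lambda)+\frac1\pi\int_{-\infty}^{\infty}\widehat{W}(\nu)\,\widehat{u^i}(\lambda-\nu)\,d\nu.
\]
The first term decays only like $1/\lambda$ when $h$ is small, so everything hinges on showing that the convolution cancels it to all orders.

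The cancellation comes from the \emph{flatness} of the window. Because $W\equiv1$ on $(-M_0/4,M_0/4)$ by (\ref{bump}), every derivative $W^{(j)}(0)$ vanishes for $j\ge1$, and hence $\int\nu^j\widehat W(\nu)\,d\nu=2\pi(-i)^jW^{(j)}(0)$ equals $0$ for $1\le j\le n$ and $2\pi$ for $j=0$. Thus, Taylor expanding $\widehat{u^i}(\lambda-\nu)$ about $\lambda$ to order $n$, all the monomial terms integrate to zero except the $j=0$ term, which contributes exactly $2\widehat{u^i}(\lambda)$ and annihilates the first term above. What survives is only the remainder,
\[
\hat\xi_W(\lambda)=\frac1\pi\int_{-\infty}^{\infty}\widehat W(\nu)\,R_n(\lambda,\nu)\,d\nu,\qquad R_n(\lambda,\nu)=\widehat{u^i}(\lambda-\nu)-\sum_{j=0}^{n}\frac{(-\nu)^j}{j!}\,\widehat{u^i}^{(j)}(\lambda).
\]

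The heart of the argument --- and the step I expect to be the main obstacle --- is a derivative bound on $\widehat{u^i}$ that is uniform in $h$, namely $\bigl|\widehat{u^i}^{(m)}(\lambda)\bigr|\le C_m|\lambda|^{-(m+1)}$ for $|\lambda|\ge 2|k|$, with $C_m$ independent of $h\ge0$. Writing $\widehat{u^i}=\tfrac12\,\phi^{-1}e^{-\phi h}$ with $\phi=\sqrt{\lambda^2-k^2}$, each $\lambda$-derivative either lands on $\phi^{-1}$, gaining a clean factor $1/\lambda$, or on $e^{-\phi h}$, producing a factor $h\,\phi'$. The apparent $h$-dependence is exactly what must be controlled, and it is controlled by the elementary inequality $\sup_{h\ge0}h^{p}e^{-a h}=(p/(e a))^{p}\lesssim a^{-p}$ with $a\sim\phi\sim\lambda$: every power of $h$ brought down is traded, uniformly in $h$, for a power of $1/\lambda$. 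A bookkeeping of the Fa\`a di Bruno terms then shows each contributes $O(\lambda^{-(m+1)})$, establishing the claim. This is precisely the estimate a naive bound cannot supply: treating $\widehat{u^i}$ and $\widehat{Wu^i}$ separately fails because the $x$-derivatives of $u^i$ at the origin blow up as $h\to0$, so the cancellation encoded in $R_n$ is essential.

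The remaining steps are routine. On $|\nu|\le|\lambda|/2$ the Taylor remainder obeys $|R_n(\lambda,\nu)|\le\frac{|\nu|^{n+1}}{(n+1)!}\sup_{|t|\le|\nu|}|\widehat{u^i}^{(n+1)}(\lambda-t)|\le C\,|\nu|^{n+1}|\lambda|^{-(n+2)}$, so this part of the integral is $O(|\lambda|^{-(n+2)})$ since $\int|\nu|^{n+1}|\widehat W(\nu)|\,d\nu<\infty$ by Lemma~\ref{lema1}. On the tail $|\nu|>|\lambda|/2$ one does not use the remainder structure at all: the superalgebraic decay of $\widehat W$ from Lemma~\ref{lema1} gives $|\widehat W(\nu)|\le C_M|\nu|^{-M}\le C_M(|\lambda|/2)^{-M}$, and since $\widehat{u^i}$ is bounded away from --- and integrable across --- its branch points at $\lambda=\pm k$, the tail is superalgebraically small for every $M$. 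As $n$ and $M$ are arbitrary, $|\hat\xi_W(\lambda)|$ decays faster than any power of $|\lambda|$, with constants independent of $h$, which is the assertion.
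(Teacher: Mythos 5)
Your argument is correct, but it takes a genuinely different route from the paper's. The paper never differentiates the explicit formula for $\widehat{u^i}$: instead it mollifies the source in \emph{physical space}, replacing the point source at $(0,h)$ by a radially symmetric $C^\infty$ bump of radius $M_0/4$, normalized via the Graf addition theorem so that the two sources generate identical fields outside that disk. Because $1-W$ vanishes on $[-M_0/4,M_0/4]$, the product $(1-W)(u^i-u^i_{M_0})$ vanishes identically on $\Gamma$, and the claim reduces to Lemma~\ref{lema1} plus the superalgebraic, $h$-uniform decay of the Fourier transform of the smooth field $u^i_{M_0}$. You instead stay entirely in the Fourier variable and extract the cancellation from the vanishing moments $\int\nu^j\widehat W(\nu)\,d\nu=2\pi\,\delta_{j0}$ (a consequence of $W\equiv1$ near the origin) combined with the uniform-in-$h$ bounds $|\widehat{u^i}^{(m)}(\lambda)|\le C_m|\lambda|^{-(m+1)}$, obtained by trading each factor of $h$ for a factor of $\bigl(\Re\sqrt{\lambda^2-k^2}\bigr)^{-1}\sim|\lambda|^{-1}$; that trade is exactly the right mechanism, and the Fa\`a di Bruno/Leibniz bookkeeping does go through for $|\lambda|\ge 2|k|$, where $\Re\sqrt{\lambda^2-k^2}\ge c|\lambda|$. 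Two loose ends to tidy, neither fatal: in the tail $|\nu|>|\lambda|/2$ you must also bound the contribution of the Taylor polynomial itself, not only $\widehat W(\nu)\,\widehat{u^i}(\lambda-\nu)$, which is immediate since each $|\widehat{u^i}^{(j)}(\lambda)|$ is bounded for $|\lambda|\ge2|k|$ and $\widehat W$ decays faster than $|\nu|^{n}$ grows; and your moment identities use the idealized window \eqref{bump}, exactly $1$ on $[-M_0/4,M_0/4]$, rather than the practical window \eqref{err} (the same caveat applies to the paper's own proof). The trade-off: your proof is self-contained and quantitative but leans on a closed form for $\widehat{u^i}$, whereas the paper's mollification and ``spectral equivalence in the far field'' device is what carries over to Theorem~\ref{impthm} and the layered-media case, where the densities are known only as solutions of the local integral equation.
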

\begin{proof}
Rather than deriving this estimate directly from formula 
(\ref{sommerfeld_cor}), we write
\[  
\hat{\xi}_W(\lambda) = -2  \left[
    \reallywidehat{1 - W_{M_0}} \ast \reallywidehat{u^i}  \right]= 
-2 \left[  \reallywidehat{1 - W_{M_0}} \ast (\reallywidehat{u^i}
  -\reallywidehat{u_{M_0}^i} +
  \reallywidehat{u_{M_0}^i}) \right],
\]
where $u_M^i$ denotes the field due to a 
smooth compactly-supported distribution centered at the source point $(0,h)$ 
of the form
\[ \frac{1}{B} W (3 |\bx-(0,h)|) \, .
\]
This scaled version of $W$ vanishes at a distance $M_0/4$ from the center.
If we let 
\[ B = 2 \pi \int_0^{M_0/4} J_0(k \rho) \, W( 3 |\bx-(0,h)| ) \, 
\rho \, d\rho,
\]
then $u^i$ and $u_M^i$ are identical, once $|\bx-(0,h)| >
M_0/4$. This is easily established by observing that for $|x|>M_0/4$,
the multipole expansion induced by the smooth, compactly supported source 
and the point source are identical.
The choice of $B$ follows directly from the Graf addition theorem \cite{nist}.
Since $1-W_{M_0}$ is identically zero in the interval
$[-M_0/4,M_0/4]$, it is clear that the product $(1-
W_{M_0})(u^i-u_{M_0}^i)$ is identically zero on the real axis. The
result now follows from Lemma \ref{lema1} and the fact that
$\reallywidehat{u_{M_0}^i}(\lambda)$ decays superalgebraically,
independent of $h$.
\end{proof}

\begin{mydef}
Let the functions $f,g: \mathbb{R} \rightarrow \mathbb{C}$.  The
functions $f$ and $g$ are said to be {\em spectrally equivalent} in the
far-field if $\reallywidehat{(f-g)(1-W)} = 0$.
\end{mydef}

Thus, Theorem \ref{dirthm} relies on the fact that $u^i$ and
$u_{M_0}^i$ are spectrally equivalent in the far field.

\section{The impedance Green's function}
\label{sec_impedance}

We now apply the preceding analysis to the case of the impedance
Green's function.
The impedance Green's function is the solution to the following
boundary value problem:
\begin{equation}\label{eq_impedgreens}
  \begin{aligned}
\Delta G_k^I(\bx,\bx_0) + k^2 G_k^I(\bx,\bx_0) &= \delta(\bx-\bx_0), &\qquad &y
> 0, \\
\frac{\partial G_k^I}{\partial n} +ik\alpha \, G_k^I &= 0, & &y =0,
  \end{aligned}
\end{equation}
where $\bx = (x,y)$ and $\bx_0 = (x_0,y_0)$, with both points
located in the upper half-plane.  The system~\eqref{eq_impedgreens}
can be re-written as a scattering problem if we express $G^I$ in two
parts:
\begin{equation}
G_k^I(\bx,\bx_0) = G_k(\bx,\bx_0) + u^s(\bx),
\end{equation}
i.e. the sum of a free-space Helmholtz point-source $G_k$ located at
$\bx_0$ and a scattered field (see~Figure~\ref{figintro}). Because of
translation invariance in $x$, we assume that the point source is
located at $\bx_0=(0,h)$ with $h$ small and positive.

Let us now denote by $\Omega$ the upper half space $y>0$ and
by $\Gamma=\partial\Omega$ the line $y=0$.
From~\eqref{helmholtz} and~\eqref{impedance}, the scattered field $u^s\in
C^2(\Omega)\cup C(\overline{\Omega})$ must satisfy 
\begin{equation}\label{scatt}
  \begin{aligned}
    \Delta u^s +k^2 u^s &= 0 &\qquad &\text{in }
    \Omega,\\ \frac{\partial u^s}{\partial n} +ik\alpha u^s &= g &
    &\text{on } \Gamma,
  \end{aligned}
\end{equation}
where $\bx=(x,0)$ on $\Gamma$ and
\begin{equation}
  g=-\frac{\partial G_k}{\partial n} -
  ik\alpha\,  G_k.
\end{equation}
The scattered field $u^s$ must also satisfy the radiation
condition
\begin{equation}\label{rad}
\lim_{r\rightarrow \infty}\sqrt{r}\left(\frac{\partial u^s}{\partial
  r}-iku^s\right)=0.
\end{equation}
The standard approach for evaluating $u^s$ is to write 
\[ u^s(\bx) = 
  \frac{1}{4\pi} \int_{-\infty}^\infty
  \frac{e^{-\sqrt{\lambda^2-k^2}y}}{\sqrt{\lambda^2-k^2}} \, 
e^{i\lambda x} \, \widehat{\xi}(\lambda) \, d\lambda,
\]
where $\widehat{\xi}(\lambda)$ is an unknown function.  This represents
a solution to the Helmholtz equation, enforces the desired radiation
condition, and permits the imposition of the boundary condition in
(\ref{scatt}) mode by mode. It is straightforward to check that
\[ 
 \widehat{\xi}(\lambda) 
 =
  -e^{-\sqrt{\lambda^2-k^2}h} 
\frac{\sqrt{\lambda^2-k^2}  + ik\alpha} 
{\sqrt{\lambda^2-k^2} - ik\alpha} \, .
\]
This is a convenient solution when $h$ is large.  When $h$ is small,
however, the interval of integration must clearly be of the order
$1/h$, which can be prohibitive.

While the standard approach to accelerating convergence of the
Sommerfeld integral is based on contour integration or some variant of
the method of images (including {\em complex images}), we
seek instead to compute $u^s$ using a combination of the Sommerfeld
integral and a single layer potential, as we did for the Dirichlet
problem above.  For this, we let
\begin{equation}\label{rep}
u^s = S_{\Gamma_0}[ \sigma_W ] + F_{I_0}[ \widehat{\xi}_W ],
\end{equation} 
where 
\begin{align}
S_{\Gamma_0}[\sigma](\bx) & = \int_{-M_0}^{M_0} G_k(\bx,\bx') \,
  \sigma(\bx') \, ds(\bx'), \\ F_{I_0}[\widehat{\xi}](\bx) & =
  \int_{-N_0}^{N_0}\frac{ e^{-\sqrt{\lambda^2-k^2}y}}
      {\sqrt{\lambda^2-k^2}}e^{i\lambda x} \, \widehat{\xi}(\lambda) \,
      d\lambda.
\end{align}
Here, $\Gamma_0$ is the finite segment $(-M_0,M_0)$ on the physical
interface $\Gamma$ and $I_0= (-N_0,N_0)$ is a finite segment in the
Fourier transform domain.  We assume $\sigma_W\in C({\Gamma_0})$ and
$\widehat{\xi}_W\in C(I_0)$. Note that $\sigma(\bx') = \sigma(x')$
with $\bx'=(x',0)$ on $\Gamma_0$.

Unlike the Dirichlet problem, we do not have an analytic formula
for the single layer density $\sigma$ and cannot determine 
$\sigma_W$ by a simple windowing procedure. Instead, we first determine
a density $\sigma$ on $\Gamma_0$ by letting
\[ u^s_1 = S_{\Gamma_0}[\sigma]  \]
and enforcing the boundary conditions in (\ref{scatt}) only on the
interval $[-M_0,M_0]$.  Using standard jump conditions
\cite{Cot2,guenther_lee}, this leads to the {\em local} integral
equation
\begin{equation}\label{finite_int}
-\frac{1}{2}\sigma+ ik\alpha \, 
S_{\Gamma_0} [\sigma] = g
\end{equation}
on $\Gamma_0$.
We will show that $\sigma \in C(\Gamma_0)$ for smooth right-hand-side
functions $g$.  After finding $\sigma$, we define $\sigma_W$ by
\begin{equation}\label{densitysig}
\sigma_W = W_{M_0} \, {\sigma},
\end{equation}
as above.

Given $\sigma_W$ from (\ref{densitysig}), we may substitute it
into~\eqref{rep} and solve for $\widehat{\xi}_W$ in the Fourier
domain:
\begin{equation}\label{freqequ}
\widehat{\xi}_W(\lambda) \left( -1 + \frac{ik
  \alpha}{\sqrt{\lambda^2-k^2}} \right) = e^{-\sqrt{\lambda^2-k^2}h}
\left (1 + \frac{ik\alpha}{\sqrt{\lambda^2-k^2}} \right) + \frac{1}{2}
\widehat{\sigma_W} - ik\alpha \widehat{S_{\Gamma_0}\sigma_W }
\end{equation}
for each $\lambda \in I_0$.
We discuss the well-posedness of the local integral equation
in Section~\ref{sec_wellposed} and
show in Section~\ref{sec_sommerfeld} that $\widehat{\xi}_W$ decays 
exponentially, in a manner controlled by $M_0$, the length of the window 
interval $\Gamma_0$, independent of the source location $h$.

\section{The layered media Green's function} \label{sec_layered}

The scheme described above for impedance boundary conditions can be
extended in a straightforward manner to the case of 
layered media. For simplicity, we assume there is a single material 
interface, $\Gamma$, that  
the wavenumber is $k_1$ in the
upper layer $\Omega_1 =\{ y>0\}$, and that the wavenumber
is $k_2$ in the lower layer
$\Omega_2 = \{y<0\}$. 
The layered media Green's function is the solution to the 
boundary value problem:
\begin{equation}\label{eq_lmgreens}
  \begin{aligned}
    \Delta G_{lm}(\bx,\bx_0) + k_1^2 G_{lm}(\bx,\bx_0) &=
    \delta(\bx-\bx_0),
    &\qquad &y>0,\\
    \Delta G_{lm}(\bx,\bx_0) + k_2^2 G_{lm}(\bx,\bx_0) &= 0, &\qquad &y<0,
  \end{aligned}
\end{equation}
subject to continuity conditions on $\Gamma$ of the form
\begin{equation}\label{continuity2}
\left[u \right] = 0, \qquad \left[ \frac{\partial u}{\partial n}
  \right] =0.
\end{equation}
We assume that the source lies in the upper half-space 
at $\bx_0 = (0,h)$, radiating at wavenumber $k_1$.
We then represent the total field in the top and bottom layers as
\begin{equation}
  \begin{aligned}
    u_1(\bx) &= u^s_1(\bx) + G_{k_1}(\bx,\bx_0), &\qquad
    &\text{for } \bx \in \Omega_1,\\ 
    u_2(\bx) &= u^s_2(\bx),
    & &\text{for } \bx \in \Omega_2. \\
  \end{aligned}
\end{equation}
By analogy with the impedance case,
we represent the scattered field in the form
\begin{equation}\label{rep_dielec}
\begin{aligned}
u^s_1 &=  S^{k_1}_{\Gamma_0}[\sigma_W] +D^{k_1}_{\Gamma_0}[\mu_W] +
F^{k_1}_{I_0}[\hat{\xi}_{W,1}],
&\qquad &\mbox{ for } \bx \in \Omega_1,\\
u^s_2 &=  S^{k_2}_{\Gamma_0}[\sigma_W] +D^{k_2}_{\Gamma_0}[\mu_W] +
F^{k_2}_{I_0}[\hat{\xi}_{W,2}],
& &\mbox{ for } \bx \in \Omega_2,
\end{aligned}
\end{equation}
where $\sigma$ and $\mu$ are unknown charge and dipole densities on
$\Gamma_0$. Note that the Sommerfeld densities $\hat{\xi}_{W,1}$ and 
$\hat{\xi}_{W,2}$ are distinct, one invoked for the upper layer and one
for the lower layer.
As above, we first solve a {\em local} integral equation on  
$\Gamma_0$ to obtain functions
$\sigma$ and $\mu$.
The integral equation is
derived by enforcing the continuity conditions~\eqref{continuity2},
and classical potential theory yields
\begin{equation}\label{finite_dielec}
\begin{aligned}
  \mu +  (S^{k_1}_{\Gamma_0}-S^{k_2}_{\Gamma_0})[\sigma]
   & = -G_{k_1}(\cdot,\bx_0), \\
 -\sigma + (T^{k_1}_{\Gamma_0}-T^{k_2}_{\Gamma_0})[\mu] & =
  -\frac{\partial G_{k_1}(\cdot,\bx_0)}{\partial n},
\end{aligned}
\end{equation}
where $T^k_{\Gamma_0}$ is the normal derivative of the 
double layer potential $D^k_{\Gamma_0}$ on
$\Gamma_0$, respectively \cite{Cot2,guenther_lee}.
Once equation \eqref{finite_dielec} is solved, we let
\begin{equation}
  \begin{aligned}
    \sigma_W & = W_{M_0}\, {\sigma}, \\
    \mu_W & = W_{M_0}\, {\mu}. 
  \end{aligned}
\end{equation}
We then substitute $\sigma_W$ and $\mu_W$ into the representation
\eqref{rep_dielec}. Taking the Fourier transform, we enforce
the continuity conditions \eqref{continuity2} frequency by frequency, and
obtain $\hat{\xi}_{W,1}$ and $\hat{\xi}_{W,2}$.

\section{Well-posedness of the integral equation}
\label{sec_wellposed}

Our method relies on the solvability of 
equations~\eqref{finite_int} and \eqref{finite_dielec}.

\begin{theorem}\label{thm1}
Let $g$ be a H\"{o}lder continuous function
on $\Gamma_0$ with exponent
$\alpha>0$, that is,  $g\in C^{0,\alpha}(\Gamma_0)$.
Then there exists a unique solution ${\sigma}\in
C(\Gamma_0)$ to the integral equation
\begin{equation}
-\frac{1}{2}{\sigma} + ik\alpha \, S_{\Gamma_0} [\sigma] = g. 
\end{equation}
\end{theorem}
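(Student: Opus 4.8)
The plan is to set up the integral equation as a second-kind Fredholm equation on the finite interval $\Gamma_0 = (-M_0, M_0)$ and invoke the Fredholm alternative, reducing existence to the statement that the homogeneous equation has only the trivial solution. First I would rewrite the equation as $(-\tfrac{1}{2}I + ik\alpha\, S_{\Gamma_0})\sigma = g$ and analyze the operator $S_{\Gamma_0}$, whose kernel is the weakly singular free-space Green's function $G_k(\bx,\bx') = \tfrac{i}{4}H_0^{(1)}(k|\bx-\bx'|)$. Because $H_0^{(1)}(kr)$ has only a logarithmic singularity as $r \to 0$, the kernel is weakly singular on the bounded arc $\Gamma_0$, so $S_{\Gamma_0}$ maps $C(\Gamma_0)$ compactly into $C(\Gamma_0)$ (and maps $C^{0,\alpha}$ boundedly into a more regular space); this is a standard mapping property for single-layer operators with logarithmic kernels. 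The term $-\tfrac12 I$ is a nonzero multiple of the identity, so the operator on the left is of the form (invertible scalar) plus compact, hence Fredholm of index zero, and the Fredholm alternative applies: existence and uniqueness of $\sigma \in C(\Gamma_0)$ will follow provided I rule out nontrivial homogeneous solutions. The Hölder regularity of $g$ then upgrades the solution's regularity through the smoothing property of $S_{\Gamma_0}$.

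The heart of the argument is therefore injectivity, and this is where I expect the main obstacle. Suppose $\sigma_0 \in C(\Gamma_0)$ solves $-\tfrac12\sigma_0 + ik\alpha\, S_{\Gamma_0}[\sigma_0] = 0$. I would form the single-layer potential $v(\bx) = S_{\Gamma_0}[\sigma_0](\bx)$ on all of $\mathbb{R}^2$ and use the standard jump relations: $v$ is continuous across $\Gamma_0$, while its normal derivative jumps by $\sigma_0$, so that the interior and exterior normal-derivative limits satisfy $\partial v/\partial n\big|_{\pm} = \mp\tfrac12\sigma_0 + S'_{\Gamma_0}[\sigma_0]$ in the appropriate sense. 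The homogeneous equation then encodes precisely the impedance condition $\partial v/\partial n + ik\alpha\, v = 0$ from one side on $\Gamma_0$. The difficulty is that $\Gamma_0$ is only a finite segment, not a closed curve, so the usual Green's-identity energy argument — integrate $|\nabla v|^2 - k^2|v|^2$ over a half-space and push the boundary term through the impedance condition to force $v\equiv 0$ — does not close cleanly: there are contributions from the endpoints of $\Gamma_0$ and from the fact that $v$ is a potential on an open arc rather than a boundary of a domain.

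To handle this, I would argue by the representation-theoretic route rather than pure energy estimates. The layer potential $v$ is an entire-domain solution of the Helmholtz equation away from $\Gamma_0$, it is continuous across the segment, and it satisfies the Sommerfeld radiation condition at infinity (since the single-layer kernel does). Combined with the homogeneous impedance relation on $\Gamma_0$, I would apply a Green's-identity argument on the upper half-disk of large radius and use $\Re(\alpha)\ge 0$ together with $\Im(k)\ge 0$ and the radiation condition to show the energy flux is nonpositive, forcing $v \equiv 0$ in the upper half-space; by continuity across $\Gamma_0$ and analytic continuation (unique continuation for the Helmholtz equation) $v$ then vanishes identically, whence the jump $\sigma_0 = [\partial v/\partial n] = 0$. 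The subtle point to get right is the treatment of the endpoint behavior of $v$ on the open arc: I would need to confirm that the single-layer potential of a continuous density on a finite segment has sufficiently mild (at worst logarithmic) endpoint singularities that the Green's-identity boundary integrals over small arcs surrounding the endpoints vanish in the limit, so that the energy argument is not corrupted by endpoint flux. This endpoint control is the technical crux, and I would expect the authors either to invoke the bounded behavior of single-layer potentials with continuous densities near arc endpoints or to restrict attention to the sign conditions on $\alpha$ and $k$ that make the flux argument robust.
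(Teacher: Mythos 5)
Your overall skeleton matches the paper's: both reduce the theorem to injectivity via compactness of the weakly singular operator $S_{\Gamma_0}$ on $C(\Gamma_0)$ and the Fredholm alternative, and both then study the single-layer potential $v=S_{\Gamma_0}[\sigma_0]$ generated by a homogeneous solution. The gap is in the uniqueness step. From the jump relations on the flat segment (where the principal-value normal derivative of the single layer vanishes), the single homogeneous equation $-\tfrac12\sigma_0+ik\alpha\,S_{\Gamma_0}[\sigma_0]=0$ yields \emph{two} boundary conditions, one on each face of the slit:
\begin{equation*}
\frac{\partial v^+}{\partial n}+ik\alpha\,v^+=0
\qquad\text{and}\qquad
\frac{\partial v^-}{\partial n}-ik\alpha\,v^-=0 \quad\text{on }\Gamma_0 ,
\end{equation*}
since $\partial v^{\pm}/\partial n=\mp\tfrac12\sigma_0$ while $v^+=v^-=S_{\Gamma_0}[\sigma_0]$. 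You record the jump relations but then extract only the one-sided condition $\partial v/\partial n+ik\alpha v=0$, and your subsequent argument never uses the condition on the other face. The paper's proof hinges on recognizing that the pair of conditions above is exactly the (two-sided) impedance problem on an open arc, for which uniqueness is a known result (the cited reference of Kress); the conclusion $\sigma_0=\partial v^-/\partial n-\partial v^+/\partial n=0$ then follows immediately.

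With only the one-sided condition, your proposed Green's-identity argument on the upper half-disk cannot close: on the portion of the line $\{y=0\}$ outside $\overline{\Gamma_0}$ the potential $v$ satisfies no boundary condition at all (it is simply a smooth Helmholtz solution across that part of the line), so the flux term $\int \bar v\,\partial v/\partial y\,dx$ over that set has no sign and cannot be absorbed. Indeed, a radiating field in the upper half-plane satisfying an impedance condition only on a finite segment need not vanish (consider a field produced by sources below the line), so no amount of endpoint care rescues the half-plane version. The correct domain for the energy identity is the complement of the slit --- a large disk minus a thin neighborhood of $\overline{\Gamma_0}$ --- where both faces contribute impedance flux terms of consistent sign precisely because of the sign flip of $\alpha$ between the two conditions; this, together with the endpoint estimates you rightly flag as the technical crux, is the content of the open-arc uniqueness theory that the paper invokes by citation rather than reproving.
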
 

\begin{proof}
Since $S_{\Gamma_0} $ is a compact operator from $C(\Gamma_0)$ to
$C(\Gamma_0)$ \cite{Cot2}, we may obtain the desired result by 
means of the Fredholm alternative and simply show
uniqueness for the homogeneous case.
Thus, assume $g=0$ and let
\begin{equation}
  v(\bx) = S_{\Gamma_0} [\sigma](\bx) \qquad
  \mbox{ for } \bx\in \mathbb{R}^2\backslash \Gamma_0.
\end{equation}
A simple calculation shows that $v$ satisfies the boundary value problem
\begin{equation}\label{arcprob}
\begin{aligned}
  \Delta v + k^2v &= 0 &\qquad
  &\mbox{in }  \mathbb{R}^2\backslash \Gamma_0, \\
\frac{\partial v^+}{\partial n} + ik \alpha v^+ &= 0 & &\mbox{on } \Gamma_0, \\
\frac{\partial v^-}{\partial n} - ik \alpha v^- &= 0 & &\mbox{on } \Gamma_0,
\end{aligned}
\end{equation}
where 
\begin{equation}
\begin{aligned}
v^{\pm}(\bx) &= \lim_{\delta \rightarrow 0^+ } v(\bx\pm \delta n(\bx)), \\
\frac{\partial v^{\pm}(\bx)}{\partial n} &=
\lim_{\delta \rightarrow 0^+ } n(\bx) \cdot \nabla v(\bx \pm \delta n(\bx)),
\end{aligned}
\end{equation}
for $\bx\in \Gamma_0$.  The system \eqref{arcprob} defines an impedance
problem on the open arc $\Gamma_0$, which has a unique solution
\cite{Kress2003}. With zero boundary data, it has only the trivial
solution, so that 
\begin{equation}
  {\sigma}= \frac{\partial v^-}{\partial n} -
\frac{\partial v^+}{\partial n} = 0.
\end{equation}
\end{proof}

\begin{remark}\label{rem1}
It is important to note that in the preceding theorem, 
$\Gamma_0$ is an open interval. 
The density ${\sigma}$ generally exhibits singularities at
the end points \cite{Kress2003,Osipov1999}.
\end{remark}

\begin{remark}\label{rem2}
When $h>0$, $g\in C^{\infty}(\Gamma_0)$ in equation~\eqref{finite_int}
and $S_{\Gamma_0}$ is a pseudo-differential operator of order
minus-one~\cite{Mclean2000}. This implies that ${\sigma} = 2(ik\alpha \,
S_{\Gamma_0} {\sigma}-g)\in C^{0,\alpha}(\Gamma_0)$ with
$0<\alpha<1$. Thus, ${\sigma}\in C^{\infty}(\Gamma_0)$.
\end{remark}

\begin{remark}
The proof of existence and uniqueness for the local integral equation
in the layered media case is analogous and omitted.
\end{remark}

\section{Exponential decay of the Sommerfeld integral} 
\label{sec_sommerfeld}

In this section, we outline a proof of the fact that $\hat{\xi}_W$
in~\eqref{freqequ} is rapidly decaying, independent of the source
location $h$. The dielectric case is more involved but the proof
follows from the same reasoning.

\begin{theorem} \label{impthm}
Let $u^s$ be the scattered field induced by a point source at $(0,h)$
satisfying (\ref{scatt}). If we represent $u^s$ in the
form~\eqref{rep}, then 
$\left| \hat{\xi}_W(\lambda) \right|$ decays superalgebraically,
independent of $h$.
\end{theorem}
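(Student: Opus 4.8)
The plan is to reduce the estimate, exactly as in Theorem~\ref{dirthm}, to the superalgebraic decay of a single Fourier transform, and then to isolate the one genuinely new ingredient: the fact that the \emph{local} density $\sigma$ from~\eqref{finite_int} reproduces the near field of the \emph{global} density to infinite order, uniformly in $h$.

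First I would solve~\eqref{freqequ} for $\widehat{\xi}_W$. Writing $m(\lambda) = -1 + ik\alpha/\sqrt{\lambda^2-k^2}$ and using the convolution identity $\widehat{S_{\Gamma_0}\sigma_W} = \widehat{\sigma_W}/(2\sqrt{\lambda^2-k^2})$ (valid because $S_{\Gamma_0}[\sigma_W]$ restricted to $\Gamma$ is the convolution of $\sigma_W$ with $\tfrac{i}{4}H_0^{(1)}(k|\cdot|)$), the two windowed terms on the right of~\eqref{freqequ} collapse to $-\tfrac12 m(\lambda)\widehat{\sigma_W}$, while the first term is exactly $m(\lambda)\,\widehat{\xi}$, where $\widehat{\xi}$ is the exact unwindowed Sommerfeld density. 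Dividing by $m(\lambda)$—which is bounded away from $0$ and tends to $-1$ as $|\lambda|\to\infty$, so the branch factor is cancelled exactly rather than estimated—gives the clean identity $\widehat{\xi}_W = \widehat{\xi} - \tfrac12\widehat{\sigma_W}$. In a form free of normalization constants, this says $F_{I_0}[\widehat{\xi}_W] = u^s - S_{\Gamma_0}[\sigma_W]$, so $\widehat{\xi}_W$ is precisely the Sommerfeld density of the field $u^s - S_{\Gamma_0}[\sigma_W]$.

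Next, let $\sigma^*$ be the exact single-layer density on the whole interface, i.e.\ the solution of the global impedance equation $-\tfrac12\sigma^* + ik\alpha\,S_{\Gamma}[\sigma^*] = g$ on $\Gamma$ (the full-line analog of Theorem~\ref{thm1}), so that $u^s = S_{\Gamma}[\sigma^*]$. Then $u^s - S_{\Gamma_0}[\sigma_W] = S_{\Gamma}[\rho]$ with $\rho := \sigma^* - W_{M_0}\sigma$ (the windowed local density extended by zero), and since the Sommerfeld density of a single layer is, up to the fixed normalization of $F_{I_0}$, the boundary Fourier transform of its density, it suffices to show that $\widehat{\rho}$ decays superalgebraically, uniformly in $h$. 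I would split $\rho = (1-W_{M_0})\sigma^* + W_{M_0}(\sigma^*-\sigma)$ and treat the pieces separately; this is the spectral-equivalence splitting used for Theorem~\ref{dirthm}, with the new feature that $\sigma \neq \sigma^*$ on $\Gamma_0$. The first piece is supported in $|x|\ge M_0/4$, where $\sigma^*$ is $C^\infty$ with $h$-uniform derivative bounds (the close-to-touching singularity sits at $x=0$ and is excised by $1-W_{M_0}$) and decays like $|x|^{-1/2}$; integration by parts then gives superalgebraic decay of its transform uniformly in $h$, exactly as in Lemma~\ref{lema1}. For the second piece, subtracting~\eqref{finite_int} from the global equation restricted to $\Gamma_0$ yields
\[
-\tfrac12(\sigma^*-\sigma) + ik\alpha\,S_{\Gamma_0}[\sigma^*-\sigma] = -ik\alpha\,S_{\Gamma\setminus\Gamma_0}[\sigma^*] \quad\text{on }\Gamma_0 .
\]
The right-hand side is the field generated by the tail of $\sigma^*$; on $\operatorname{supp}(W_{M_0})\subset(-3M_0/4,3M_0/4)$ it is evaluated at distance $\ge M_0/4$ from that tail, hence is $C^\infty$ there, and—because the singular data $g$ has cancelled in the subtraction—its smoothness and bounds are uniform in $h$ even as the source approaches the boundary. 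Interior regularity for the order $-1$ operator (Remark~\ref{rem2}) then gives $\sigma^*-\sigma\in C^\infty$ with $h$-uniform bounds on $\operatorname{supp}(W_{M_0})$, so $W_{M_0}(\sigma^*-\sigma)$ is a compactly supported $C^\infty$ function with $h$-uniform norms whose transform again decays superalgebraically uniformly in $h$. Combining the two pieces proves the claim.

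The one delicate step is the $h$-uniformity in the second piece: both $\sigma$ and $\sigma^*$ individually develop sharper and sharper features at $x=0$ as $h\to0$, and it is only their difference that stays smooth. Making this rigorous requires (i) $h$-uniform control of the tail density $\sigma^*|_{\Gamma\setminus\Gamma_0}$—which is far from the source and hence well behaved, but must be bounded quantitatively—and (ii) an interior elliptic estimate for $-\tfrac12 I + ik\alpha\,S_{\Gamma_0}$ with constants independent of $h$. Everything else is a routine adaptation of the argument already carried out for Theorem~\ref{dirthm}.
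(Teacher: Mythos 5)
Your route is genuinely different from the paper's. The paper never introduces a global density: it takes \eqref{freqequ} at face value and splits its right-hand side into four terms (see \eqref{eqxi}) --- the source term $\widehat{g}-\widehat{g_W}$, the windowed residual of the local equation \eqref{finite_int} (which vanishes identically), a term involving $W \cdot S_{\Gamma_0}[(1-W)\sigma]$, and one involving $(1-W)\,S_{\Gamma_0}[\sigma_W]$ --- disposing of the first and last by the spectral-equivalence/mollification device of Theorem~\ref{dirthm} and of the third by support considerations. You instead observe that the symbol $m(\lambda)=-1+ik\alpha/\sqrt{\lambda^2-k^2}$ cancels exactly once one uses $\widehat{S_{\Gamma_0}\sigma_W}=\widehat{\sigma_W}/\bigl(2\sqrt{\lambda^2-k^2}\bigr)$, so that $\widehat{\xi}_W$ is, up to normalization, the transform of $\rho=\sigma^*-W_{M_0}\sigma$ with $\sigma^*$ the global single-layer density; the theorem then becomes a statement about how well the windowed local density matches the global one. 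That is an attractive structural insight --- in particular the equation you derive for $\sigma^*-\sigma$ by subtracting \eqref{finite_int} from the global equation is a clean way to see why the local solve reproduces the $h$-dependent singularity to all orders --- and it buys a two-term decomposition in place of the paper's four. The price is that you must construct and control $\sigma^*$ (via $\widehat{\sigma^*}=2\widehat{g}/m$): its existence, its $|x|^{-1/2}$ oscillatory tail, its $h$-uniform smoothness away from $x=0$, and $h$-uniform bounds on $\Gamma\setminus\Gamma_0$ are all used but not established, and none of them is needed in the paper's argument.

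One step fails as written. For the piece $(1-W_{M_0})\sigma^*$ you claim that smoothness away from $x=0$ together with $O(|x|^{-1/2})$ decay permits integration by parts ``exactly as in Lemma~\ref{lema1}.'' It does not: $(1-W_{M_0})\sigma^*$ behaves like $e^{\pm ik|x|}|x|^{-1/2}$ at infinity, so it is not integrable and each derivative brings down a factor of $ik$ without improving decay; naive integration by parts never yields an absolutely convergent integral, and in fact the transform of this piece is genuinely singular at $\lambda=\pm k$. You need either a non-stationary-phase argument --- write $\sigma^*=e^{ikx}a_+(x)+e^{-ikx}a_-(x)$ with $a_\pm$ symbols of order $-1/2$ and integrate by parts against $e^{i(\pm k-\lambda)x}$, which gives superalgebraic decay only for $|\lambda|$ bounded away from $k$ (sufficient for the theorem, but it must be said) --- or the paper's device of replacing the source by a spectrally equivalent mollified one so that the problematic term becomes the transform of a smooth, rapidly decaying object. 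The remaining gaps you flag yourself (the $h$-uniform interior estimate for $-\tfrac12 I+ik\alpha\,S_{\Gamma_0}$ and quantitative control of the tail of $\sigma^*$); given that the paper's own proof is only a sketch, those are tolerable as stated, but the $(1-W_{M_0})\sigma^*$ step must be repaired before the argument is sound.
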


\begin{proof}[Sketch of proof]
Rewriting \eqref{freqequ} slightly, 
$\hat{\xi}_W$ satisfies 
\begin{equation}\label{eqxi}
  \begin{aligned}
\left(-1+\frac{ik\alpha}{\sqrt{\lambda^2-k^2}}\right)\hat{\xi}_W &=
\widehat{g} + \frac{1}{2}\widehat{\sigma_W} - ik\alpha \,
\reallywidehat{S_{\Gamma_0} [\sigma_W]} \\
&= \widehat{g} - \widehat{g_W} +
(\widehat{g_W} + \frac{1}{2}\widehat{\sigma_W} - ik\alpha \,
\reallywidehat{S_{\Gamma_0} [\sigma_W]}) \\
&= \widehat{g} - \widehat{g_W} +
\reallywidehat{W \cdot (g + \frac{1}{2}{\sigma} - ik\alpha \,
  {S_{\Gamma_0} [\sigma]})} 
 + ik\alpha \, \reallywidehat{W \cdot
  {S_{\Gamma_0} [(1-W) \sigma]}} \\
&\qquad - ik\alpha
\reallywidehat{(1- W) {S_{\Gamma_0} [\sigma_W]}} \, .
  \end{aligned}
\end{equation}
Each of these terms satisfies the desired superalgebraic decay estimate.
The result for $\widehat{g} - \widehat{g_W}$ follows the proof of
Theorem~\ref{dirthm}. The second term vanishes 
since it is the Fourier transform of the residual from solving the local 
integral equation.  (In practice, it is zero to discretization error.)
The third term is complicated 
since the solution of the local integral equation, $\sigma$, is
singular at the endpoints of $\Gamma_0$. However, the frequency
content of $S_{\Gamma_0} [(1-W) \sigma]$ is controlled near the
origin, and multiplication by $W = W_{M_0}$ restricts the function to the
interval $[-3M_0/4,3M_0/4]$. 
The last term follows from the spectral equivalence in the far field
of $S_{\Gamma_0} [\sigma_W]$ and a mollified version of
$\sigma_W$. 
\end{proof}
We state without proof the analogous result
for the dielectric case.

\begin{theorem} \label{lmthm}
Let $u^1$ and $u^2$ denote the total fields for $y>0$ and $y<0$,
respectively, satisfying the continuity conditions
(\ref{continuity2}), with a point source in the upper medium at
$(0,h)$.  If we represent $u^s$ in the form~\eqref{rep_dielec}, then
$\left| \hat{\xi}_{W,1}(\lambda) \right|$ and
$\left| \hat{\xi}_{W,2}(\lambda) \right|$ decay superalgebraically,
independent of $h$.
\end{theorem}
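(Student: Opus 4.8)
The plan is to follow the structure of the proof of Theorem~\ref{impthm}, replacing the single scalar relation~\eqref{eqxi} by the $2\times 2$ system obtained when the two continuity conditions~\eqref{continuity2} are enforced mode by mode. Writing $\beta_j=\sqrt{\lambda^2-k_j^2}$ for $j=1,2$ and Fourier transforming the representation~\eqref{rep_dielec} in $x$, the Sommerfeld terms $F^{k_j}_{I_0}$ --- the only ones carrying the unknowns $\hat{\xi}_{W,j}$ --- contribute $1/\beta_j$ to the trace of $u$ and $\mp1$ to the trace of $\partial u/\partial n$ at $y=0^{\pm}$, while the single, double, and hypersingular layers contribute the known symbols $1/(2\beta_j)$, $\pm1/2$, and $\mp\beta_j/2$ acting on $\widehat{\sigma_W}$ and $\widehat{\mu_W}$. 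Imposing $[u]=0$ and $[\partial u/\partial n]=0$ frequency by frequency then yields
\begin{equation*}
A(\lambda)\begin{pmatrix}\hat{\xi}_{W,1}(\lambda)\\ \hat{\xi}_{W,2}(\lambda)\end{pmatrix}=\vec{b}(\lambda),
\end{equation*}
where the $2\times 2$ matrix $A(\lambda)$ is built solely from $\beta_1,\beta_2$ and the vector $\vec{b}(\lambda)$ collects everything known: the transformed incident data (the traces of $G_{k_1}$ and $\partial G_{k_1}/\partial n$ on $\Gamma$) and the windowed-density terms $\reallywidehat{(S^{k_1}_{\Gamma_0}-S^{k_2}_{\Gamma_0})[\sigma_W]}$, $\reallywidehat{(T^{k_1}_{\Gamma_0}-T^{k_2}_{\Gamma_0})[\mu_W]}$, $\widehat{\sigma_W}$, and $\widehat{\mu_W}$. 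The first step is to assemble this system and read off $\vec{b}$.

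Second, I would control $A(\lambda)^{-1}$. A short computation shows that $\det A(\lambda)$ is proportional to $(\beta_1+\beta_2)/(\beta_1\beta_2)$, the familiar denominator of the layered-media reflection coefficient. Under the branch convention enforcing the radiation condition one has $\Re\beta_j>0$ for every real $\lambda$, so $\beta_1+\beta_2$ never vanishes and $A(\lambda)$ is invertible on the whole real line; this is precisely the absence of real poles for the transmission problem. Examining the cofactors shows that the entries of $A(\lambda)^{-1}$ grow at most linearly in $|\lambda|$. Hence it suffices to prove that each component of $\vec{b}(\lambda)$ decays superalgebraically, uniformly in $h$.

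Third, each component of $\vec{b}$ is treated exactly as the right-hand side of~\eqref{eqxi}. I would add and subtract the windowed residuals of the local system~\eqref{finite_dielec}, namely $W\cdot[\mu+(S^{k_1}_{\Gamma_0}-S^{k_2}_{\Gamma_0})[\sigma]+G_{k_1}(\cdot,\bx_0)]$ and $W\cdot[-\sigma+(T^{k_1}_{\Gamma_0}-T^{k_2}_{\Gamma_0})[\mu]+\partial G_{k_1}(\cdot,\bx_0)/\partial n]$, both of which vanish identically because $W$ is supported in $\Gamma_0$, where~\eqref{finite_dielec} holds. The incident-data pieces then appear only as $\reallywidehat{(1-W)G_{k_1}}$ and $\reallywidehat{(1-W)\,\partial G_{k_1}/\partial n}$, which decay superalgebraically and independently of $h$ by the far-field spectral-equivalence argument of Theorem~\ref{dirthm} (replace the point source by a mollified compactly supported source and invoke Lemma~\ref{lema1}). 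What remain are commutator-type terms $\reallywidehat{W\cdot(S^{k_1}_{\Gamma_0}-S^{k_2}_{\Gamma_0})[(1-W)\rho]}$ and $\reallywidehat{(1-W)(S^{k_1}_{\Gamma_0}-S^{k_2}_{\Gamma_0})[\rho_W]}$ for $\rho\in\{\sigma,\mu\}$, together with their $T$-analogues, which match the third and fourth terms of the impedance proof.

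The hard part will be the hypersingular difference $T^{k_1}_{\Gamma_0}-T^{k_2}_{\Gamma_0}$ combined with the endpoint singularities of $\sigma$ and $\mu$ (Remark~\ref{rem1}). Although each $T^{k_j}_{\Gamma_0}$ has order $+1$, the difference $T^{k_1}_{\Gamma_0}-T^{k_2}_{\Gamma_0}$ is smoothing, since the wavenumber-independent principal symbol cancels; the same is true of $S^{k_1}_{\Gamma_0}-S^{k_2}_{\Gamma_0}$. This cancellation is what keeps the frequency content of the commutator terms controlled near the origin despite the hypersingularity. Combined with the fact that $W$ (respectively $1-W$) keeps the evaluation away from the singular endpoints $\pm M_0$ within the relevant support, each term of the form $W\cdot(\cdot)[(1-W)\rho]$ is smooth and compactly supported, hence has a superalgebraically decaying transform, while each $(1-W)(\cdot)[\rho_W]$ is spectrally equivalent in the far field to a mollified compact source. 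Carrying out this bookkeeping simultaneously for $\sigma$, $\mu$, and for both the $S$- and $T$-differences is the one place where the dielectric argument is genuinely more involved than the impedance case; once it is in place, the four-term decomposition of every component of $\vec{b}$ closes the proof.
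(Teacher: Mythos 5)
The paper states this theorem without proof, saying only that ``the dielectric case is more involved but the proof follows from the same reasoning'' as Theorem~\ref{impthm}; your proposal is a correct and considerably more explicit elaboration of exactly that reasoning --- the $2\times 2$ mode-by-mode system with determinant proportional to $(\beta_1+\beta_2)/(\beta_1\beta_2)$ and polynomially bounded inverse, the four-term decomposition of each component of the right-hand side mirroring~\eqref{eqxi}, and the observation that the differences $S^{k_1}_{\Gamma_0}-S^{k_2}_{\Gamma_0}$ and $T^{k_1}_{\Gamma_0}-T^{k_2}_{\Gamma_0}$ are smoothing, which is what tames the endpoint singularities of $\sigma$ and $\mu$ noted in Remark~\ref{rem1}. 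Nothing in your argument conflicts with the paper's sketch for the impedance case; it simply supplies the details the authors chose to omit.
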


\section{Numerical examples}
\label{sec_numerical}

In the experiments described below, all of the local integral
equations are solved using Nystr\"om discretization on panels, each
containing (scaled) $16^\text{th}$-order Legendre nodes. We use
generalized Gaussian quadrature for the self-interaction panels,
following the approach described in~\cite{bremer, hao_barnett}.
Off-panel evaluation is carried out using Quadrature By
Expansion (QBX)~\cite{barnett-2014, klockner_2013, klockner_2013b} and the
resulting linear systems are solved iteratively using
GMRES~\cite{saad-1986}.

We fix the parameters $M_0=20$ and $N_0 = 30$ for all examples, and as
mentioned before, use the window function in~\eqref{err}. To
accurately evaluate the Sommerfeld integral and avoid the square root
singularity in the integrand, we
deform the integration contour along a hyperbolic tangent curve:
\begin{equation}\label{hypertan}
\lambda(t) = t-\frac{\tanh(t)}{2}i, \qquad t \in [-30,30].
\end{equation} 
This contour is then discretized using $600$ uniformly distributed
points in $t$, and the integral is evaluated by mean of the trapezoidal
rule (which will converge exponentially fast in this case).

\subsection{Impedance Green's function evaluation}

Our first example is simply the evaluation of the impedance Green's
function for a source located very near to the interface.  We set the
impedance constant $\alpha$ in equation~\eqref{impedance} to be
$\alpha = 1-0.1i$.  Numerical results are shown in
Table~\ref{tab:NumericalResult1} and Figure~\ref{Impedgreen}.

Figure~\ref{Impedgreen} plots the result when the source is located at
$(0,10^{-8})$ for wavenumber $k=10$. One can see a sharp spike in the
charge density $\sigma$ which causes slow convergence in the Fourier
domain. However, once we apply the window function to the local
integral equation, the resulting Sommerfeld density has a rapidly
decaying Fourier transform, as shown in Figure~\ref{Impedgreen3}. By
the time $\lambda = 30-\frac{\tanh(30)}{2}i$, $\hat\xi_W$ is already
less than $10^{-10}$. The density $\sigma$ was discretized along
$\Gamma_0$ using 
$160$  panels, adaptively refined toward the
origin. This required $2560$ discretization nodes.

Table \ref{tab:NumericalResult1} shows the value of the density
$\hat\xi_W$ at $t=-30$ in \eqref{hypertan} and the error in the
impedance condition $\partial u/\partial n +ik\alpha u$ at $(2.5,0)$
for a variety of wavenumbers $k$ and heights $h$.  Note that the error
is independent of the height $h$ and that the density $\hat{\xi}$ has
decayed rapidly.

\begin{remark}
When the source point is very close to the interface, such as
$h=10^{-8}$, the maximal normal derivative of $G_k$ on $\Gamma$ is of
the order $O(1/h)$, while the potential is of the order $O(\log{h})$.
To avoid catastrophic cancellation, we place an image source at the
reflected point across $\Gamma$ with the same strength as the original
source. This cancels the normal derivative of $G_k$ on $\Gamma$. It is
added back in the final evaluation.  We apply the same technique when
evaluating the layered media Green's function in the next
example. This is a finite precision issue, and independent of our
theory. For a general scattering problem, if the net contribution from
a continuous charge density on the interface is $O(1)$,
as in Example 3, we do not need to carry out this stabilization.
\end{remark}

\begin{figure}[t]
    \centering
    \begin{subfigure}[t]{.25\linewidth}
        \centering
        \includegraphics[width=1\linewidth]{./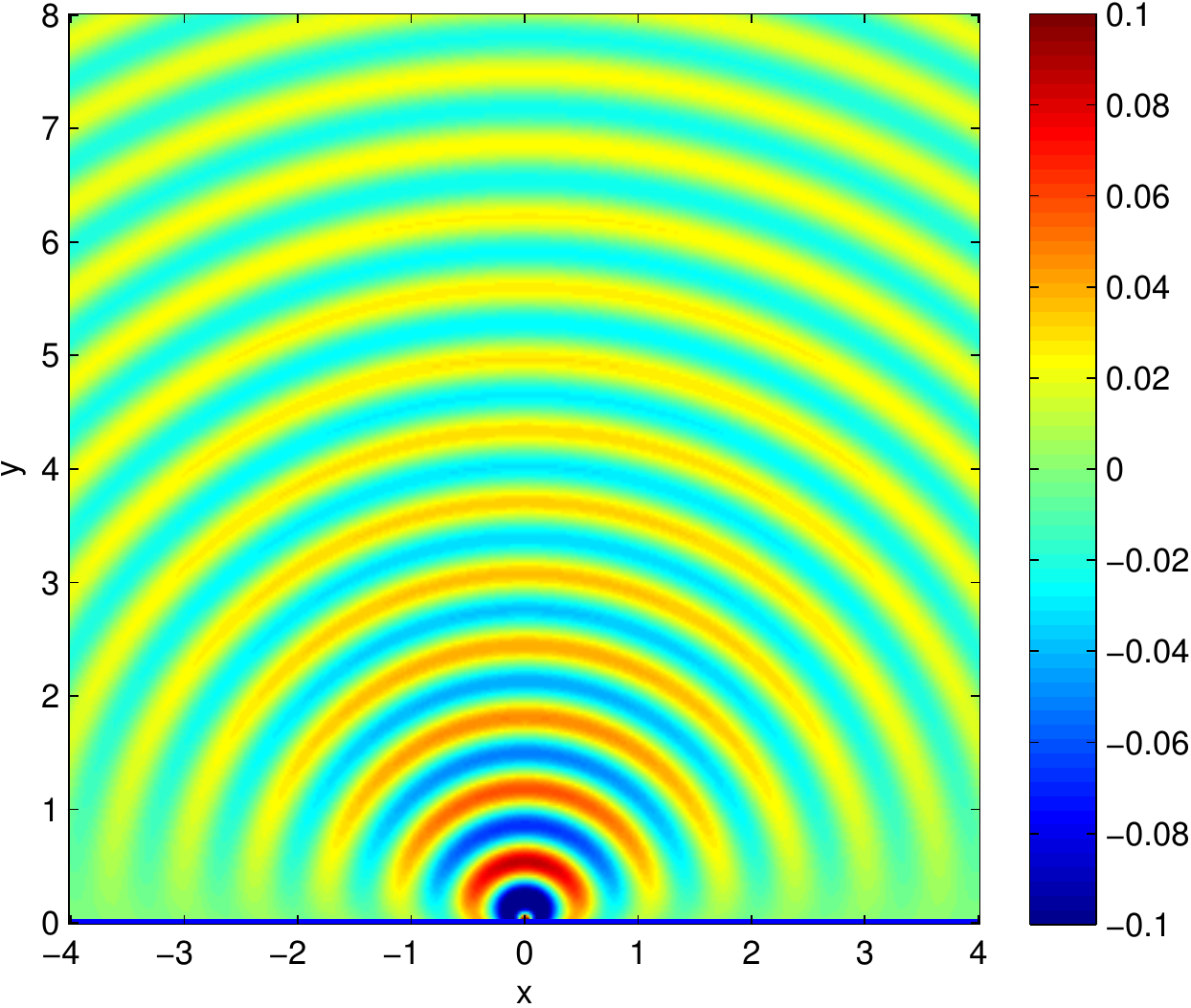}
        \caption{The real part of the impedance Green's function.}
        \label{Impedgreen1}
    \end{subfigure}
    \quad
    \begin{subfigure}[t]{.25\linewidth}
        \centering
        \includegraphics[width=1\linewidth]{./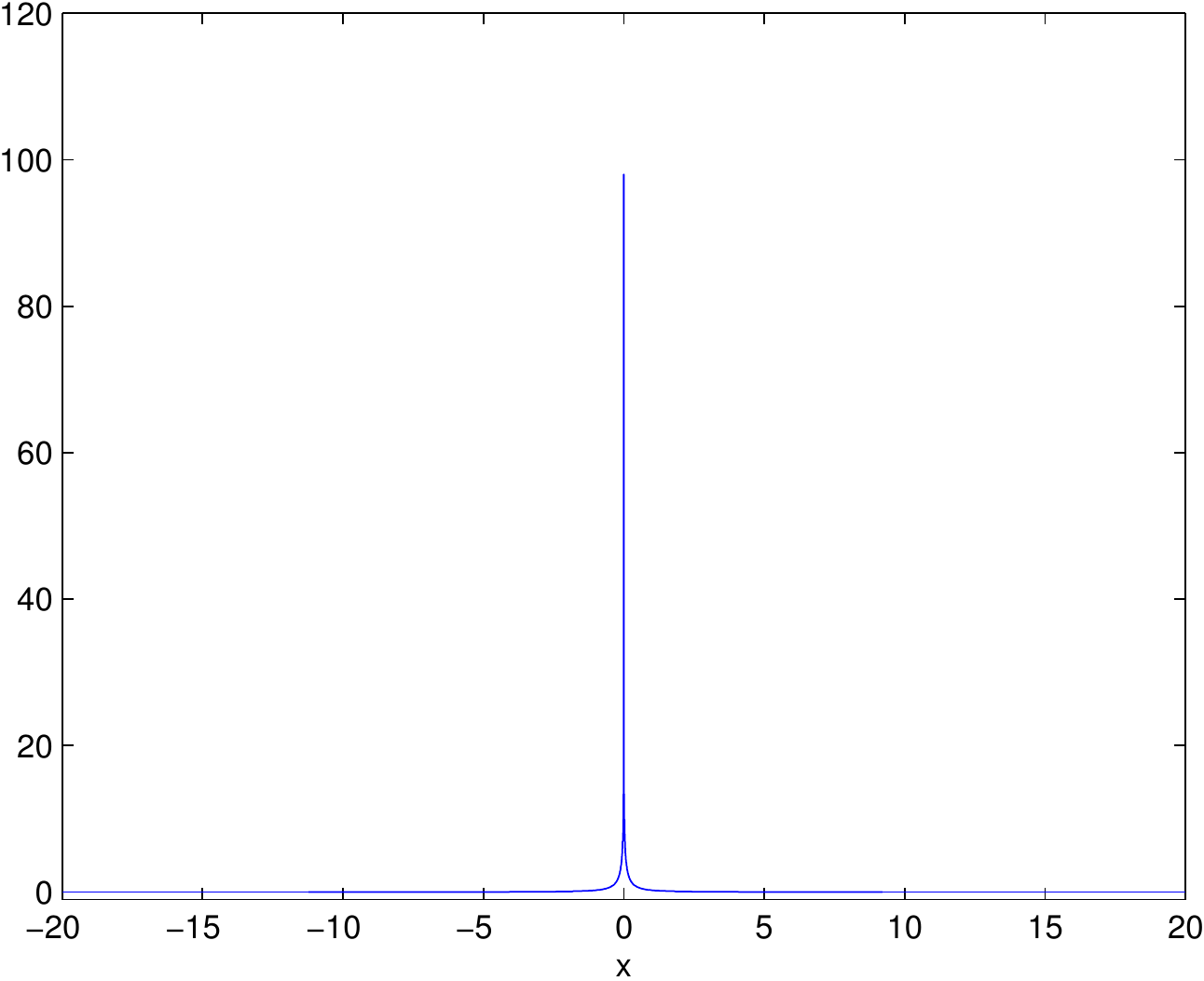}
        \caption{The magnitude of $\sigma$.}\label{Impedgreen2}
    \end{subfigure}
    \quad
    \begin{subfigure}[t]{.25\linewidth}
        \centering
        \includegraphics[width=1\linewidth]{./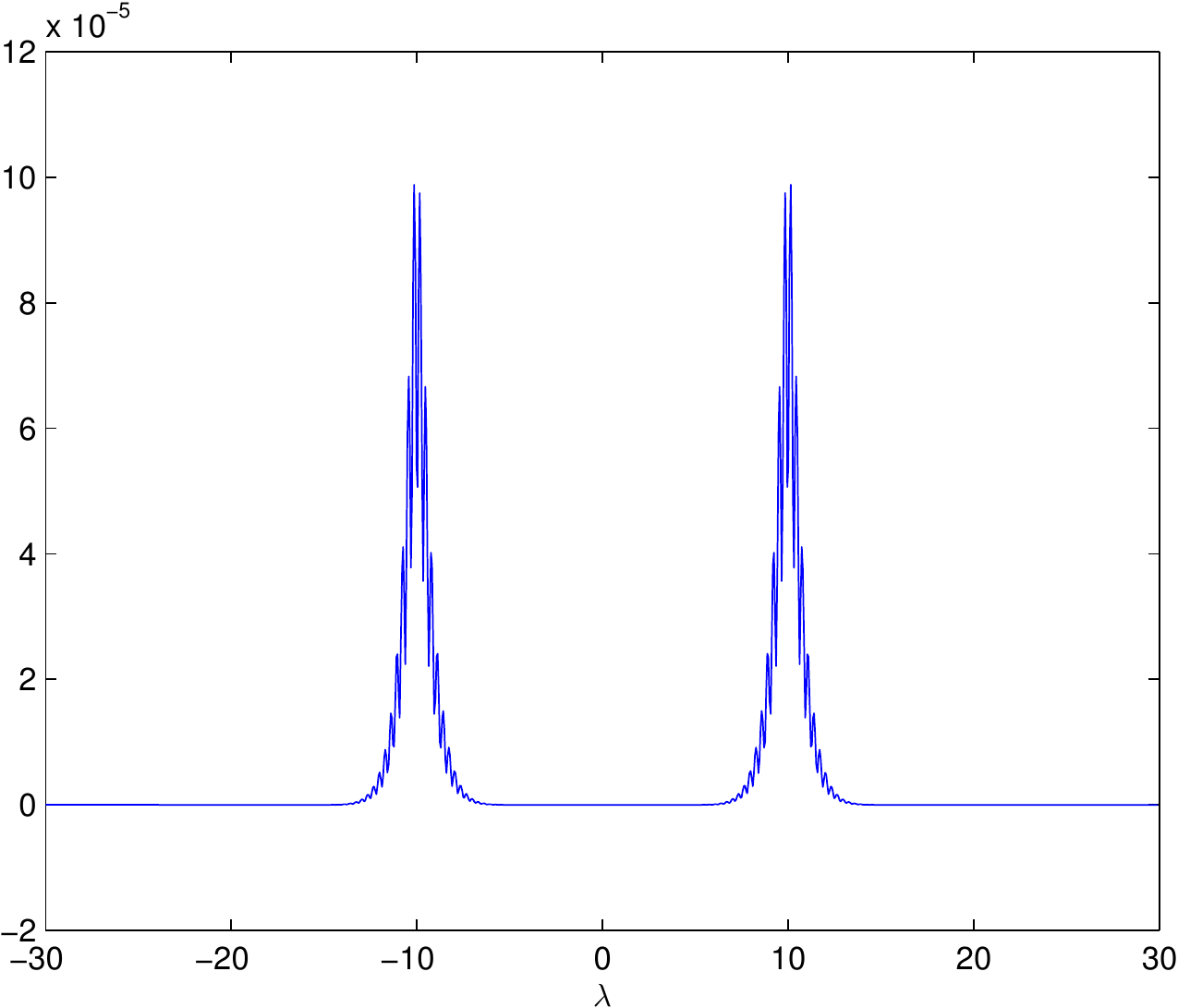}
        \caption{The magnitude of $\hat{\xi}$.}\label{Impedgreen3}
    \end{subfigure}
    \caption{The impedance Green's function, along with magnitudes of
      the layer potential density $\sigma$ and the Sommerfeld integral
      density $\hat\xi_W$.}
    \label{Impedgreen}
\end{figure}

\begin{table}[t]
\centering
  \begin{tabular}{|c|cccccc|}
\hline
Wavenumber $k$ & 1 & 1 & 10 & 10 & 20+$i$ & 20+$i$ \\
\hline
Height $h$ & $1$e-5 & $1$e-8 & $1$e-5 & $1$e-8 &$1$e-5 & $1$e-8 \\
\hline
$\hat{\xi}(-30)$ & $7.99$e-16 & $2.15$e-16 &$2.06$e-11 &$2.06$e-11 &$4.83$e-11 &$4.83$e-11 \\
\hline
Error  & $3.49$e-15 & $3.28$e-15  &$2.15$e-11 &$2.15$e-11 &$1.22$e-10 &$1.22$e-10 \\
\hline
\end{tabular}
\caption{Convergence results for the evaluation of the 
impedance Green's function when the source point is close to the interface.}
\label{tab:NumericalResult1}
\end{table}

\subsection{Layered media Green's function evaluation}

In our second numerical example, we evaluate the layered media Green's
function.  Figure~\ref{Dielecgreen} shows the real part of the Green's
function for $k_1=10$ in the upper half-space and $k_2=20$ in the
lower half-space with source point at $(0,10^{-8})$.  As in the
impedance example, there is a spike in the dipole density $\mu$ at
$x=0$. Using {\em only} a Sommerfeld integral approach would require a
prohibitively large interval of integral to obtain convergence, while
our hybrid scheme achieves rapid convergence in the Fourier domain, as
can be seen from the plot of $\hat{\xi}_{W,1}$ in
Figure~\ref{fig_dgreen3}.  More detailed data concerning errors in
evaluating the Green's functions are provided in
Table~\ref{tab:NumericalResult2}. The error is measured as the
discrepancy in the potential at $(2.5,0)$ by evaluating the limits
from above and below. The Sommerfeld density decays rapidly,
independent of the height $h$, consistent with our analysis. As with
the impedance problem, $160$ panels adaptively refined toward the
origin ($2560$ points) were required for the discretization of
$\sigma$ and $\mu$ on $\Gamma_0$.

\begin{figure}[t]
    \centering
    \begin{subfigure}[t]{.25\linewidth}
        \centering
        \includegraphics[width=1\linewidth]{./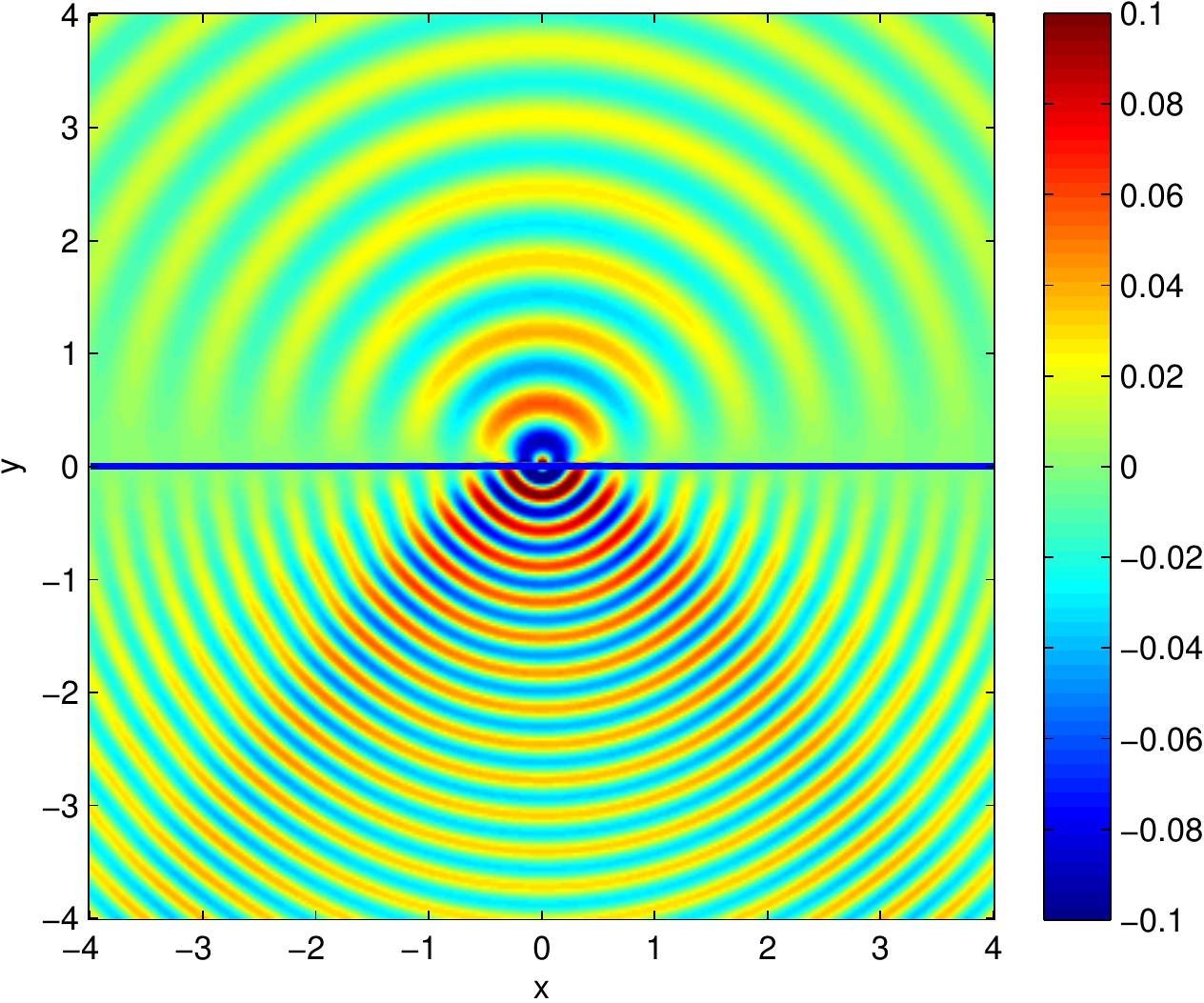}
        \caption{The real part of the layered media Green's function.}
        \label{fig_dgreen1}
    \end{subfigure}
    \quad
    \begin{subfigure}[t]{.25\linewidth}
        \centering
        \includegraphics[width=1\linewidth]{./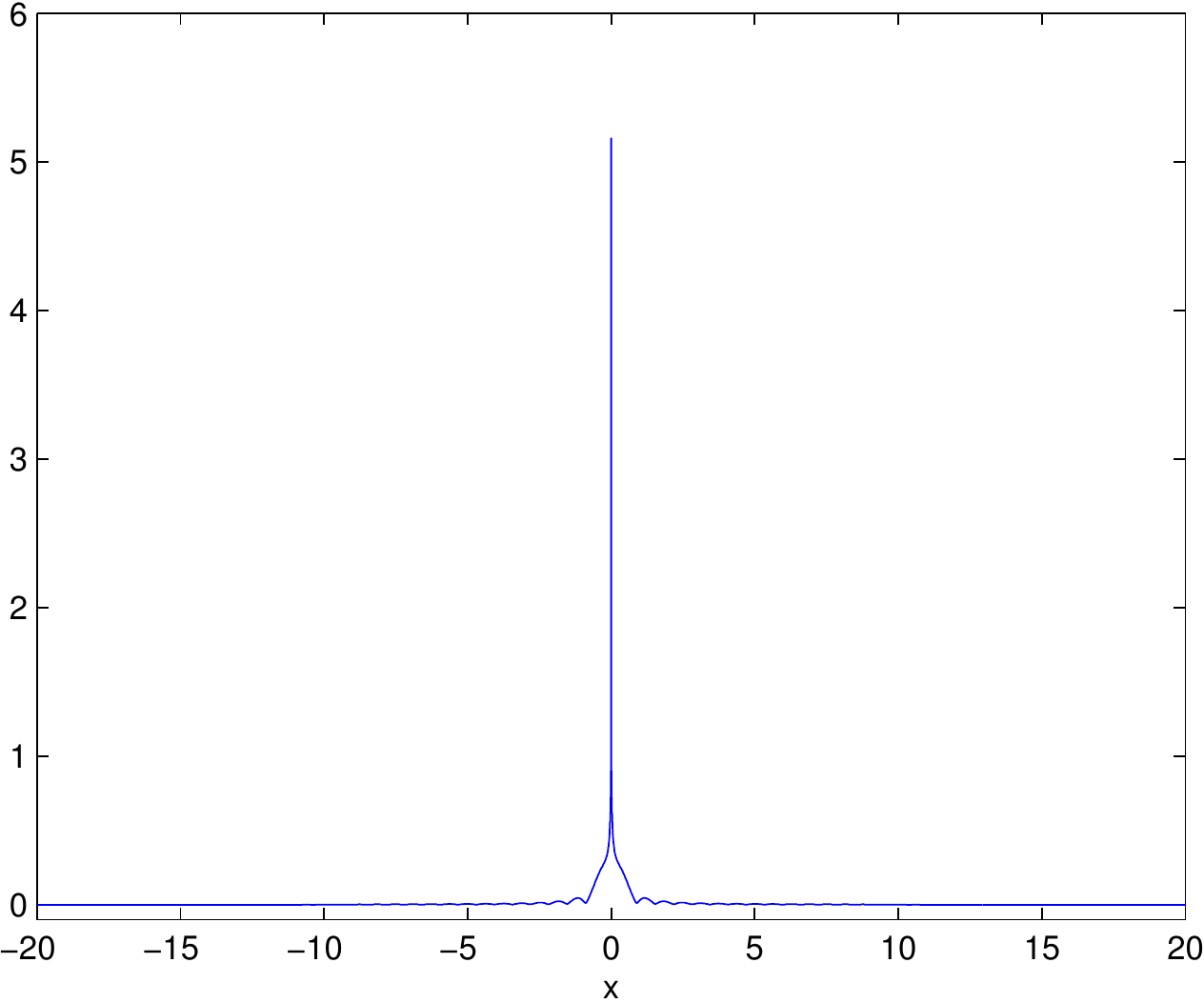}
        \caption{The magnitude of 
          $\mu$.}\label{fig_dgreen2}
    \end{subfigure}
    \quad
    \begin{subfigure}[t]{.25\linewidth}
        \centering
        \includegraphics[width=1\linewidth]{./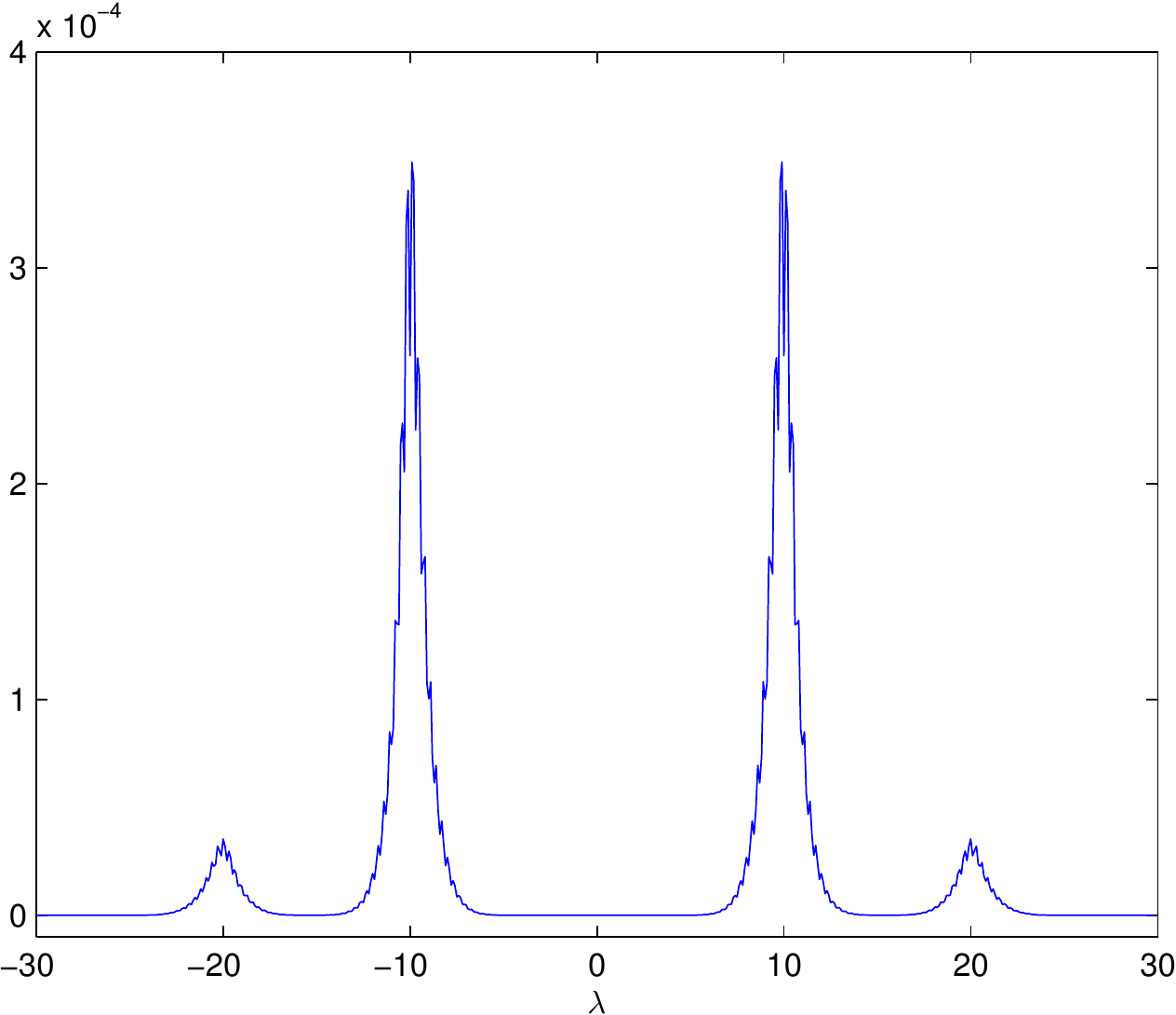}
        \caption{The magnitude of 
          $\hat{\xi_{W,1}}$.}\label{fig_dgreen3}
    \end{subfigure}
    \caption{The layered media Green's function, the magnitudes of
      the layer potential density $\mu$ and the Sommerfeld integral
      density $\hat\xi_{W,1}$.}
    \label{Dielecgreen}
\end{figure}

\begin{table}[t] 
  \begin{center}
    \small
\begin{tabular}{|c|cccccc|}
\hline
Wavenumber $k_1$ & 1 & 1 & 10+$i$ & 10+$i$ & 10 & 10 \\
\hline
Wavenumber $k_2$ & 5 & 5 & 5 & 5 & 20 & 20 \\
\hline
Height $h$ & $1$e-5 & $1$e-8 & $1$e-5 & $1$e-8 &$1$e-5 & $1$e-8 \\
\hline
$\hat{\xi_1}(-30)$  & $4.09$e-15 & $3.67$e-15 & $1.02$e-15 &$1.82$e-15 &$1.63$e-11 &$1.63$e-11 \\
\hline
Error  & $1.44$e-15 & $1.01$e-15 & $3.88$e-14 &$3.85$e-14 &$8.28$e-10 &$8.28$e-10 \\
\hline
\end{tabular}
\end{center}
\caption{Convergence results for the evaluation of the layered media 
Green's function when the source point is close to the interface.}
\label{tab:NumericalResult2}
\end{table}

\subsection{Scattering from inclusions}

We now show the application of our scheme to solving a scattering
problem from an object extremely close to, or partially buried in, a 
layered media interface. The incident wave is assumed to be generated
by a point source located at $(3,3)$.

\subsubsection{Close-to-touching object lying in the upper half-space}

We consider a circular object $\Omega_0$ with radius $1$ centered at
$(0,1+\epsilon)$ above the interface. We set $\epsilon = 10^{-10}$, so
that the object is only a distance $10^{-10}$ from the interface
$\Gamma$.  The object is assumed to be a perfect conductor or a
sound-soft scatterer, both of which require that we impose zero
Dirichlet conditions on the object.  This yields the following
boundary value problem:
\begin{equation}\label{eq_close}
  \begin{aligned}
    \Delta u + k_1^2 u &= 0, &\qquad &\text{in } \Omega_1, \\
    \Delta u + k_2^2 u &= 0, &\qquad &\text{in } \Omega_2, \\
    u &= 0, & &\text{on } \partial\Omega_0, \\
    \left[ u \right] = \left[ \frac{\partial u}{\partial
        y} \right]
    &= 0, &  &y = 0,
  \end{aligned}
\end{equation}
along with a suitable decay condition at infinity.

To solve this scattering problem, we add an additional
unknown dipole density $\mu_0$ on the boundary of the inclusion
$\Omega_0$.
The scattered field $u^s$ is then represented by
\begin{equation}\label{rep_dielec1}
u^s = \left\{ 
\begin{array}{ll}
  S^{k_1}_{\Gamma_0}[\sigma_W] +D^{k_1}_{\Gamma_0}[\mu_W] +
  F^{k_1}_{I_0}[\hat{\xi}_{W,1}] + D^{k_1}_{\partial \Omega_0}[\mu_0]
  &\qquad \text{in } \Omega_1,\\
  S^{k_2}_{\Gamma_0}[\sigma_W]
  +D^{k_2}_{\Gamma_0}[\mu_W] + F^{k_2}_{I_0}[\hat{\xi}_{W,2}] &\qquad \text{in }
  \Omega_2.
\end{array} \right.
\end{equation}
Results for $k_1=10$ and $k_2=20$ are shown in Figure~\ref{Dielec1}.
We only required $120$ panels adaptively refined toward the origin
($1920$ points) for the discretization of $\sigma$ and $\mu$ on
$\Gamma_0$.  The discretization of $\mu_0$ on $\partial \Omega_0$
required $30$ panels.

\begin{figure}[t]
  \centering
  \begin{subfigure}[t]{.4\linewidth}
    \centering
    \includegraphics[width=1\linewidth]{./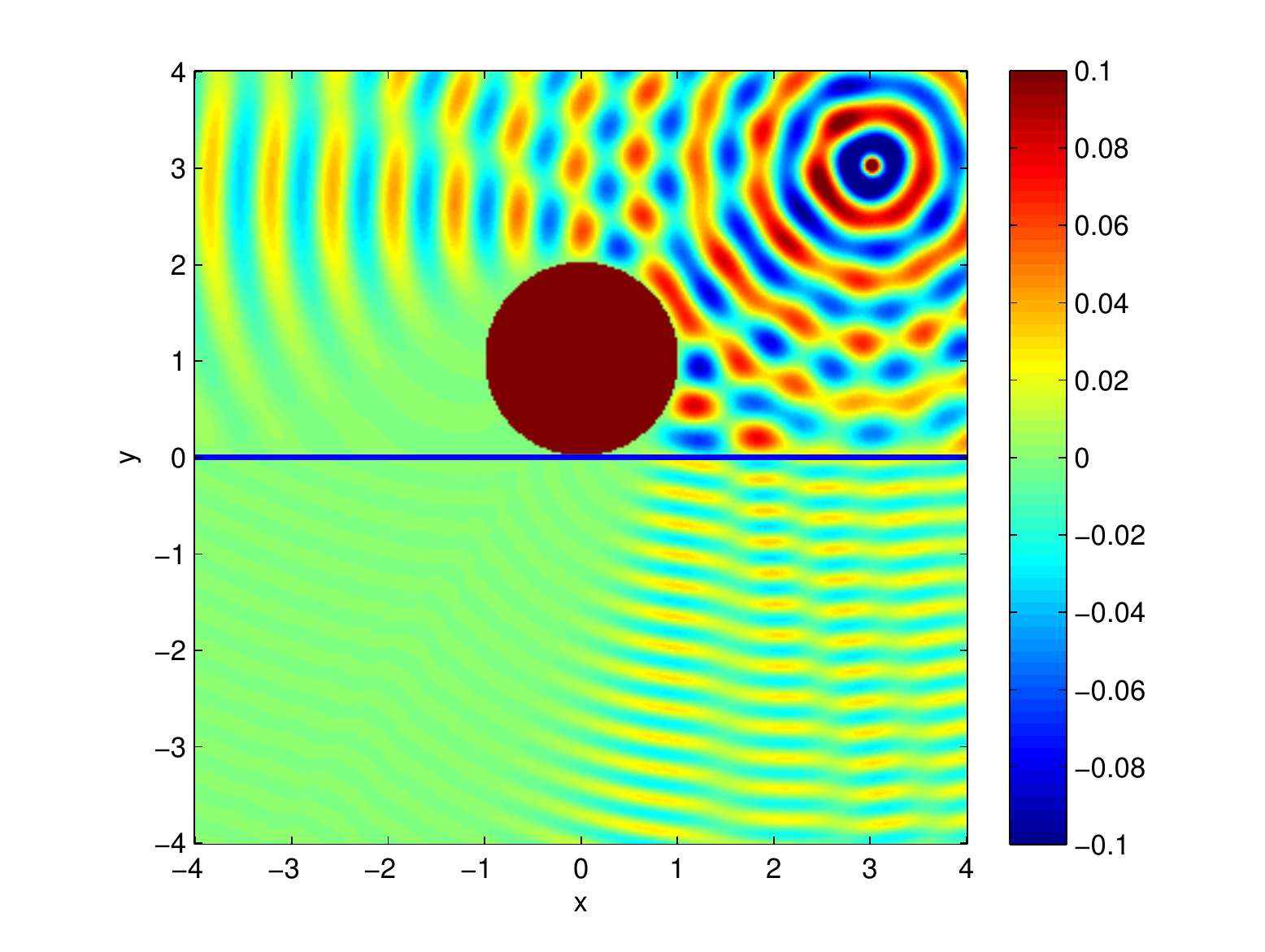}
    \caption{The real part of the total field scattered in the layered
      media with a circular object located right above the interface.}
    \label{fig_dielec1a}
  \end{subfigure}
  \qquad
  \begin{subfigure}[t]{.4\linewidth}
    \centering
    \includegraphics[width=1\linewidth]{./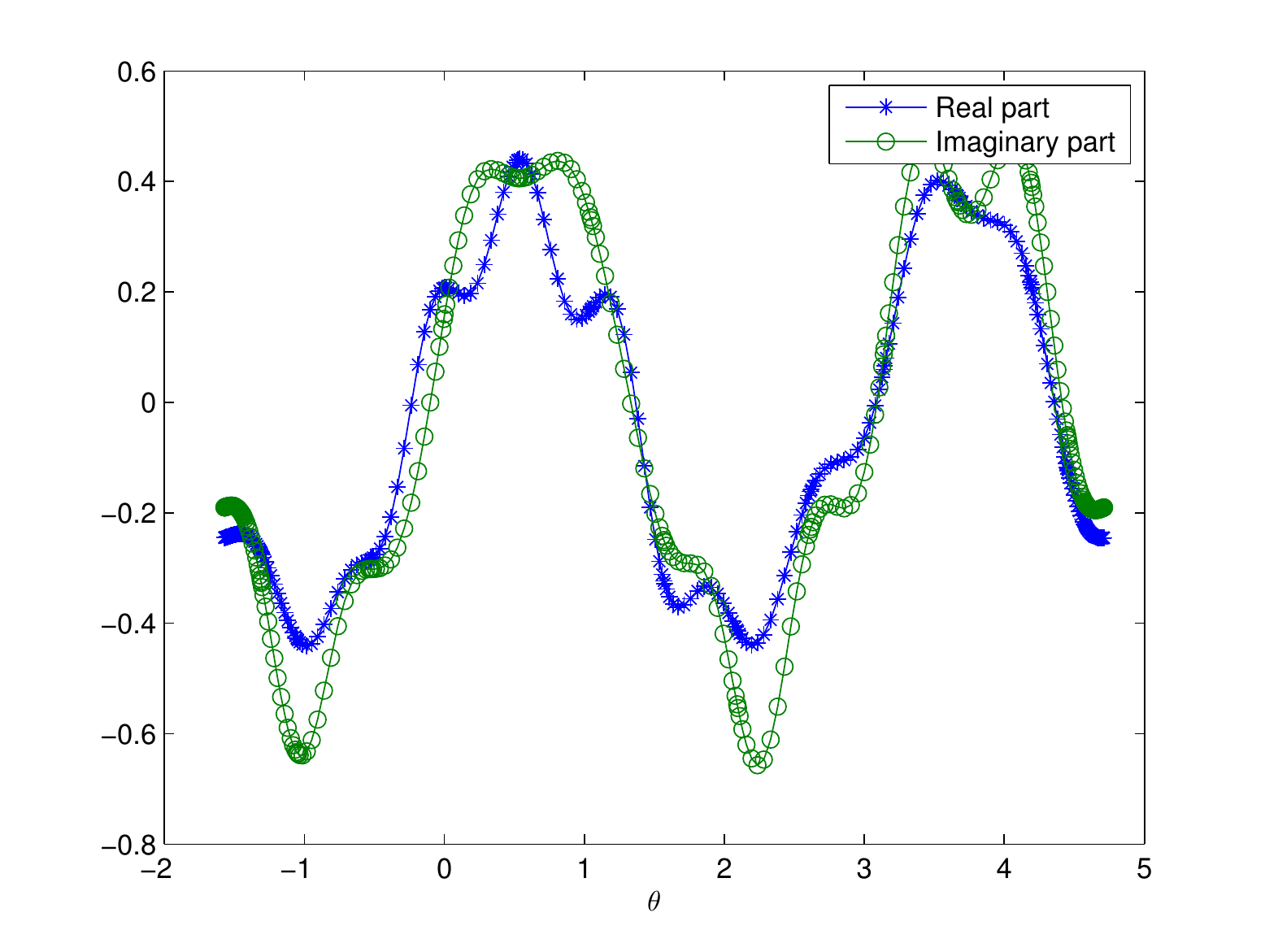}    
    \caption{The real and imaginary parts of the
      dipole density $\mu_0$ on the boundary of the object}
    \label{fig_dielec1b}
  \end{subfigure}
  \\
  \begin{subfigure}[t]{.4\linewidth}
    \centering
    \includegraphics[width=1\linewidth]{./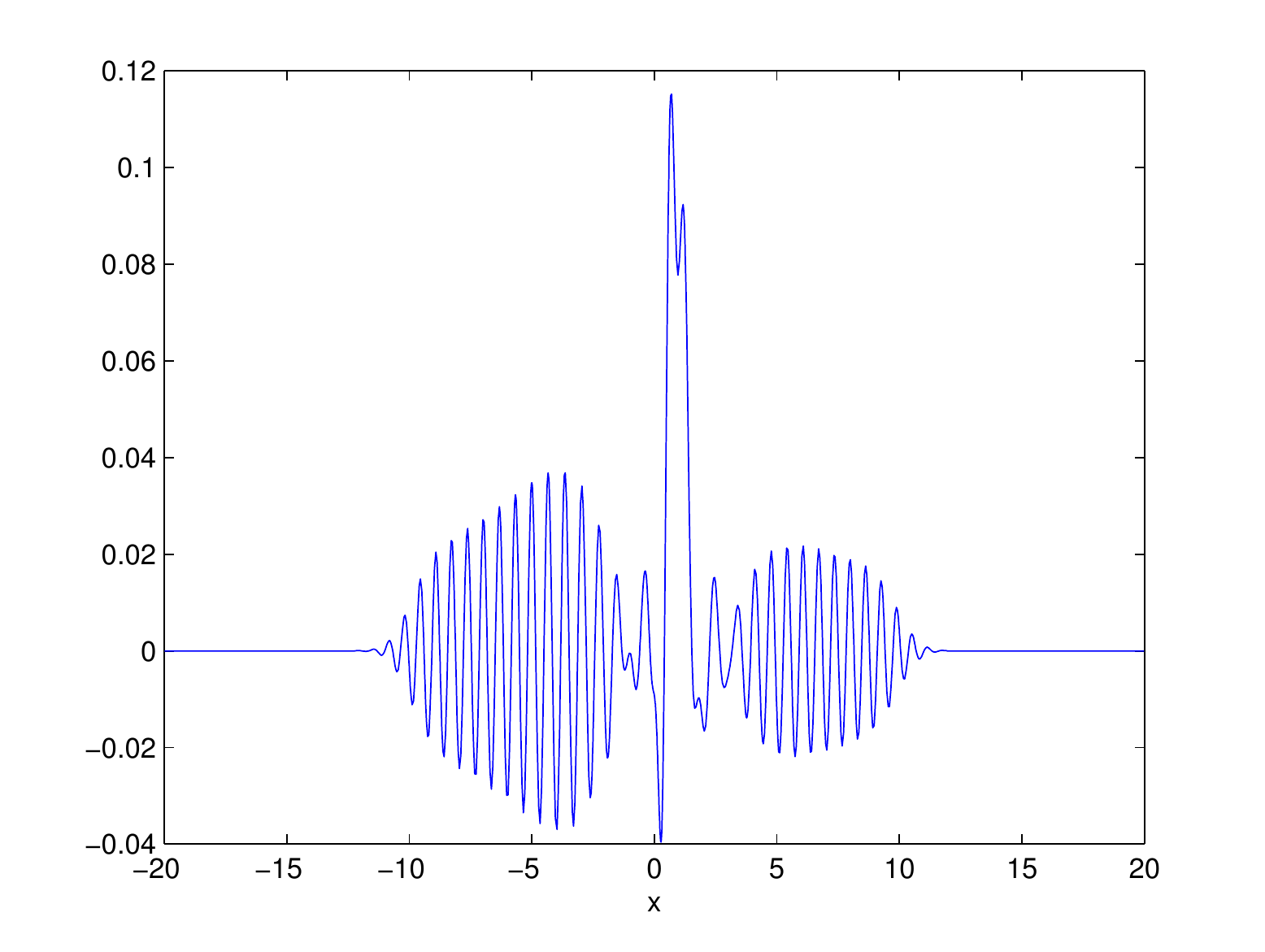}    
    \caption{Real part of the charge density $\sigma_W$.}
    \label{fig_dielec1c}
  \end{subfigure}
  \qquad
  \begin{subfigure}[t]{.4\linewidth}
    \centering
    \includegraphics[width=1\linewidth]{./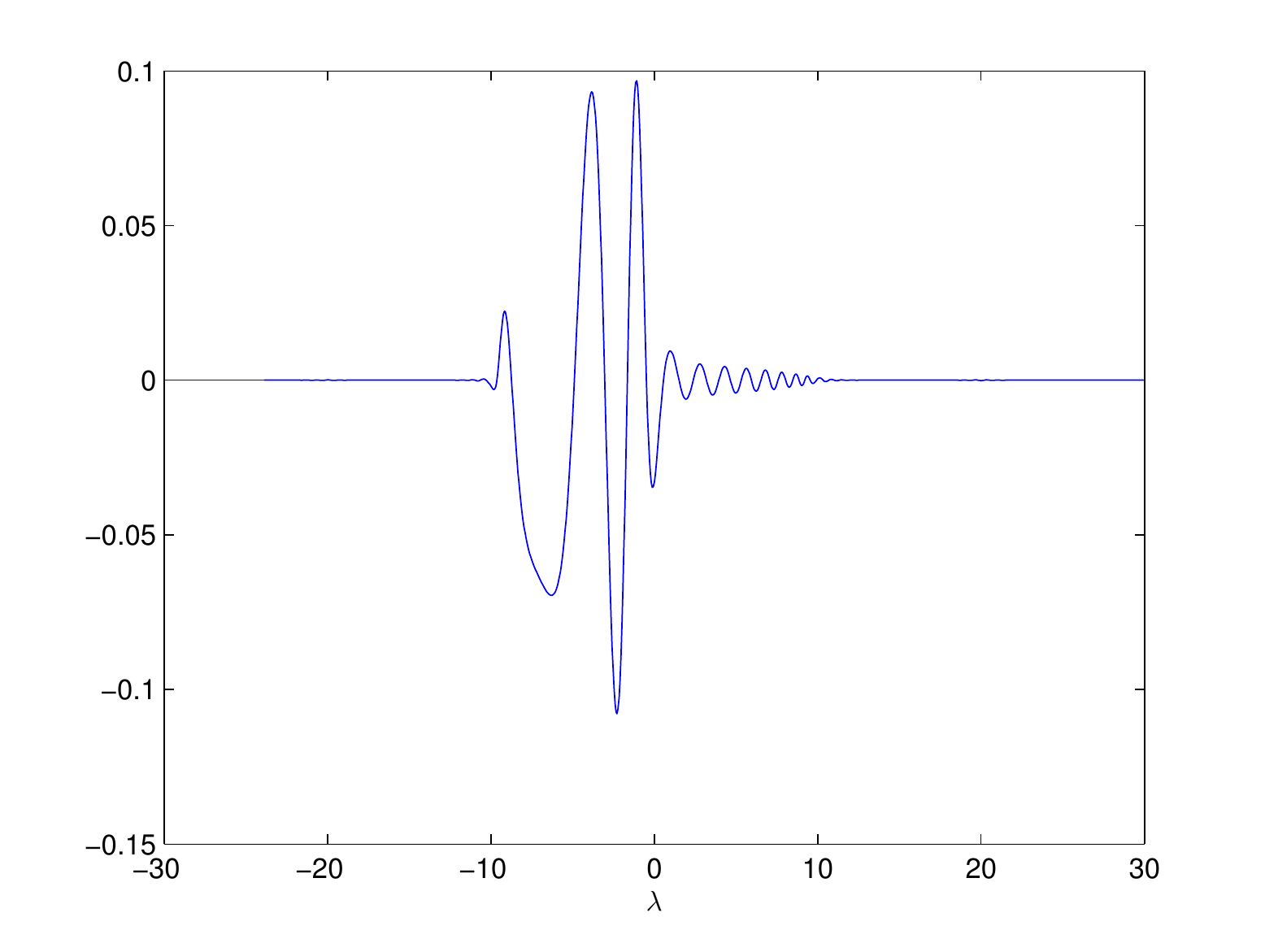}
    \caption{Real part of the Sommerfeld density $\hat{\xi}_{W,1}$.}
    \label{fig_dielec1d}
  \end{subfigure}
  \caption{Scattering from an object arbitrarily close to a layered
    media interface.}
  \label{Dielec1}
\end{figure}

\begin{table}[t] 
\begin{center}
    \small
\begin{tabular}{|c|cccccc|}
\hline
Wavenumber $k_1$ & 1 & 1 & 10+$i$ & 10+$i$ & 10 & 10 \\
\hline
Wavenumber $k_2$ & 2 & 5 & 5 & 5+$i$ & 20+5$i$ & 20 \\
\hline
GMRES iter. & 9 & 10 & 14 & 14 &31 & 31 \\
\hline
$\hat{\xi_1}(-30)$ & $3.15$e-13 & $1.08$e-13 & $3.54$e-14 &$3.62$e-14 &$1.78$e-8 &$4.06$e-8 \\
\hline
Error  & $9.24$e-12 & $1.64$e-12 & $1.32$e-10 &$2.12$e-10 &$1.44$e-11 &$8.04$e-11 \\
\hline
\end{tabular}
\end{center}
\caption{Convergence results for scattering from an object close to the 
interface of the layered media. The closest distance is $10^{-10}$.}
\label{tab:NumericalResult3}
\end{table}

From Figure~\ref{fig_dielec1b}, the charge density $\mu_0$ on the
boundary of the disk behaves rather benignly. However, the density on
the interface has a sharp feature near $x=0$, as
expected. Figure~\ref{fig_dielec1d} shows rapid decay in the
Sommerfeld integrand.  More detailed errors for various values of the
wavenumber in the layered media are shown in
Table~\ref{tab:NumericalResult3}.  The GMRES iterations are stopped when
the residual is less than $10^{-10}$.  The errors are obtained by
solving an artificial scattering problem with a known exact
solution. For this, the field in each domain is defined by a set of
free-space sources located in the complement of the domain (see
\cite{JL2014} for further details). In all of our tests, the
Sommerfeld integrand decays rapidly.

\subsubsection{Scattering from a partially buried object}

In our final example, a disk of radius $1$ crosses the interface
between the two layers. The center of the disk is $(0,0)$, and we 
solve the same boundary value problem as in~\eqref{eq_close}. The
scattered field is represented as:
\begin{equation}\label{rep_dielec2}
u^s = \left\{ 
\begin{array}{ll}
  S^{k_1}_{\Gamma_0}[\sigma_W] +D^{k_1}_{\Gamma_0}[\mu_W] +
  F^{k_1}_{I_0}[\hat{\xi}_{W,1}] + D^{k_1}_{\partial \Omega_0}[\mu_0]
  &\qquad \text{in } \Omega_1,\\
  S^{k_2}_{\Gamma_0}[\sigma_W]
  +D^{k_2}_{\Gamma_0}[\mu_W] + F^{k_2}_{I_0}[\hat{\xi}_{W,2}]
    + D^{k_2}_{\partial \Omega_0}[\mu_0] &\qquad \text{in }  \Omega_2.
\end{array} \right.
\end{equation}
Note that the density $\mu_0$ on the scatterer is used globally -- on
both sides of the layered media interface.  As in the earlier work
\cite{GHL2014,JL2014}, this has the advantage that the resulting
integral equation is of the second kind.  Results are shown in
Figure~\ref{Dielec2}. The densities $\hat{\xi}_{W,1}$ and
$\hat{\xi}_{W,2}$ in the respective Sommerfeld integrals are again
rapidly convergent. Along $\Gamma_0$, $180$ panels ($2880$ points)
adaptively refined toward the intersection of the inclusion and
$\Gamma_0$ were used to discretize $\sigma$ and $\mu$ on
$\Gamma_0$. Discretizing $\mu_0$ on $\partial \Omega_0$ required $60$
panels.  Detailed numerical errors are presented in
Table~\ref{tab:NumericalResult4}.  The results, again, are consistent
with our analysis.

\section{Conclusions}
\label{sec_conclusions}

We have constructed a hybrid approach to acoustic wave scattering in
layered media or in half-spaces with impedance boundary
conditions. Our approach retains the advantages of classical (physical
space) layer potentials in handling close-to-touching interactions and
the advantages of the Sommerfeld integral in representing smooth
interactions along infinite boundaries.  By solving a local integral
equation and subtracting its effect from the original boundary data, we have
shown that the remaining problem can be solved in the Fourier domain
with a rapidly convergent integrand. 

We have also shown that the hybrid representation is very convenient
when solving scattering problems with obstacles (including partial
buried obstacles).  High-order accurate results are easily obtained
without the direct construction of the Green's function. Instead, our
representation splits the problem into a free-space scattering problem
posed on obstacles and finite segments, plus a Sommerfeld correction
to enforce the boundary condition along the infinite interface.  We are
currently extending our method to axisymmetric and fully
three-dimensional problems in both acoustics and electromagnetics.

\begin{figure}[t]
  \centering
  \begin{subfigure}[t]{.4\linewidth}
    \centering
    \includegraphics[width=1\linewidth]{./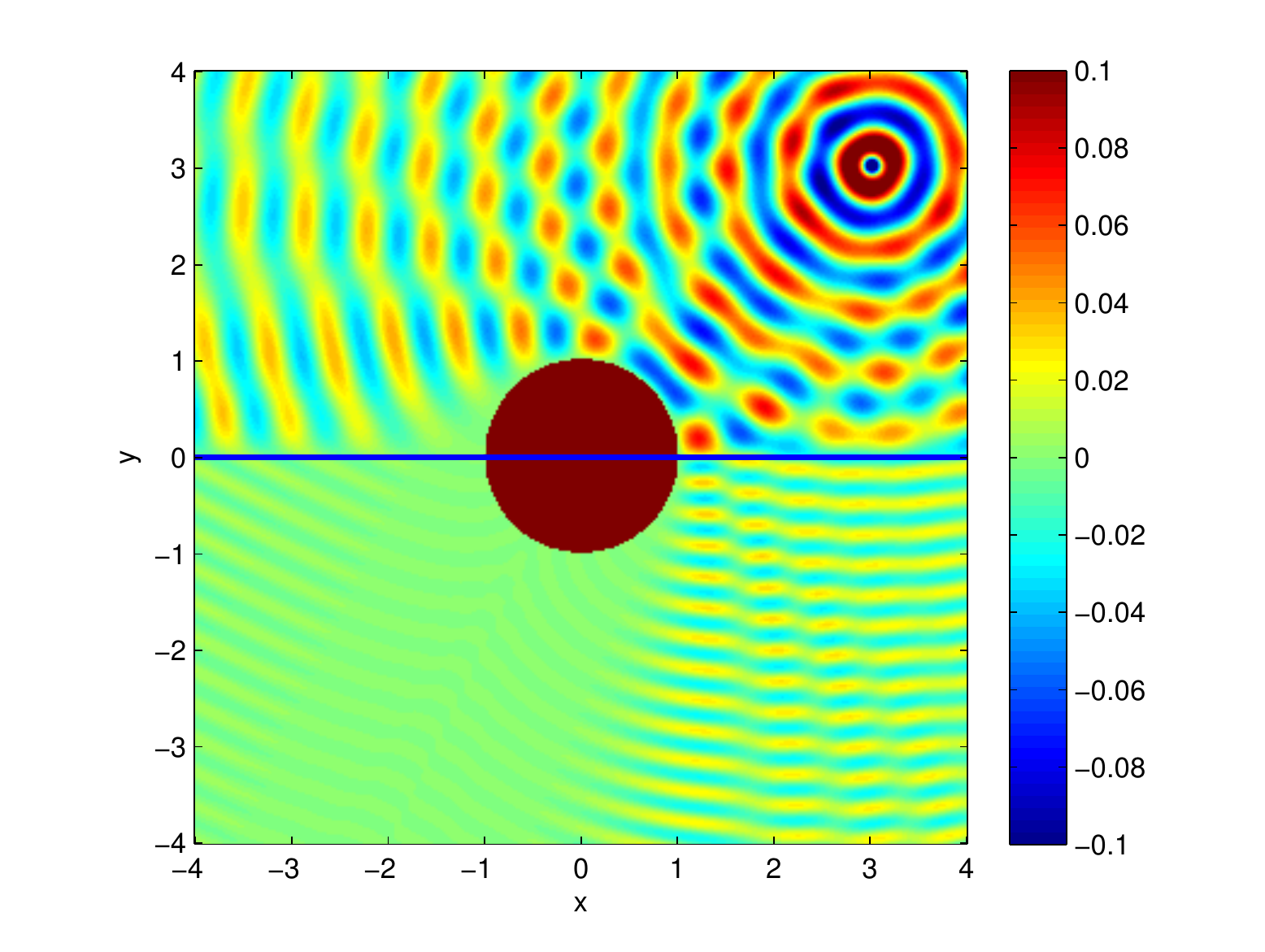}
    \caption{The real part of the total field scattered in the layered
      media with a circular object partially buried in the layers.}
    \label{fig_dielec2a}
  \end{subfigure}
  \qquad
  \begin{subfigure}[t]{.4\linewidth}
    \centering
    \includegraphics[width=1\linewidth]{./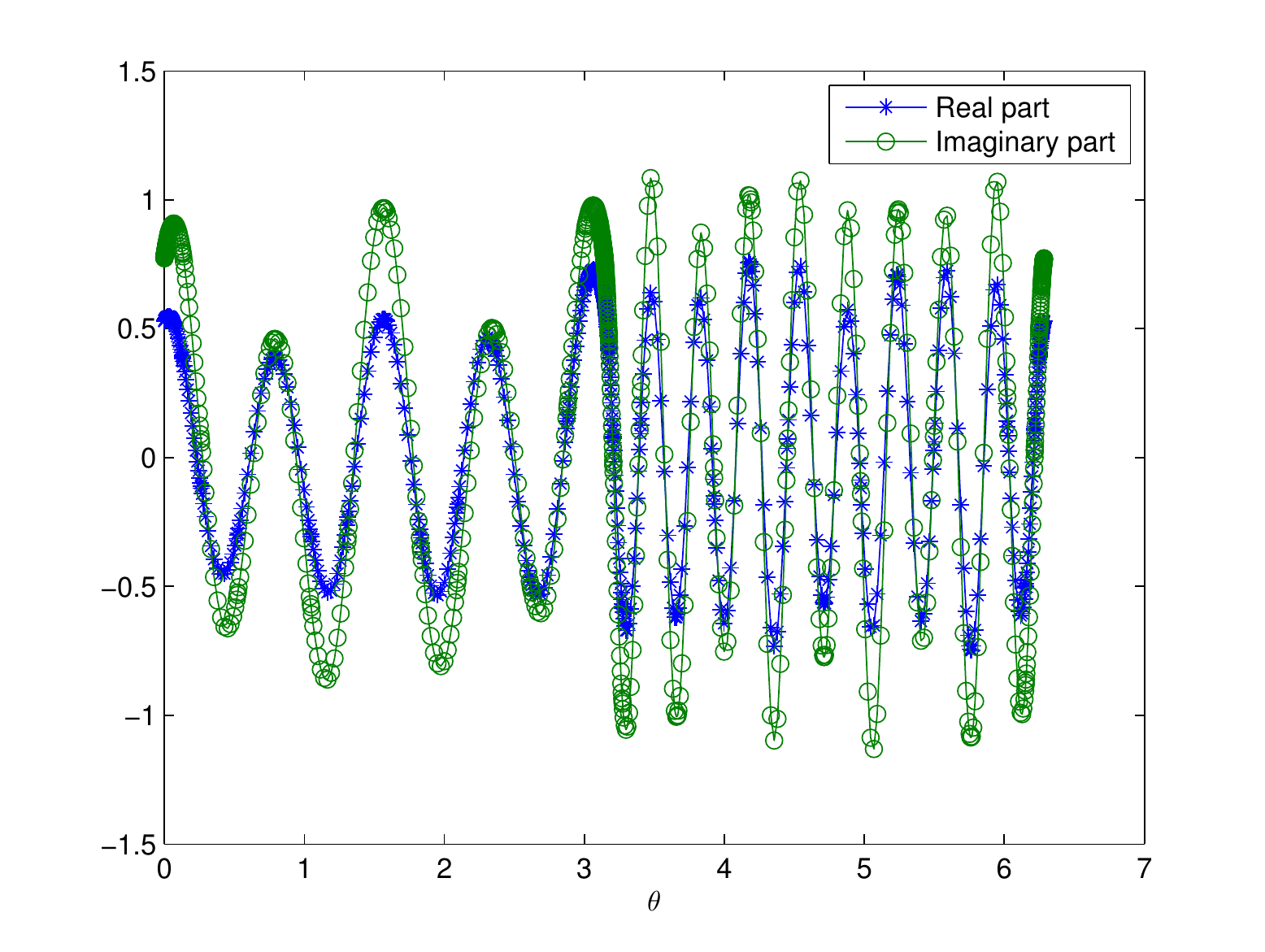}    
    \caption{The real and imaginary part of the
      dipole density $\mu_0$ on the boundary of the object}
    \label{fig_dielec2b}
  \end{subfigure}
  \\
  \begin{subfigure}[t]{.4\linewidth}
    \centering
    \includegraphics[width=1\linewidth]{./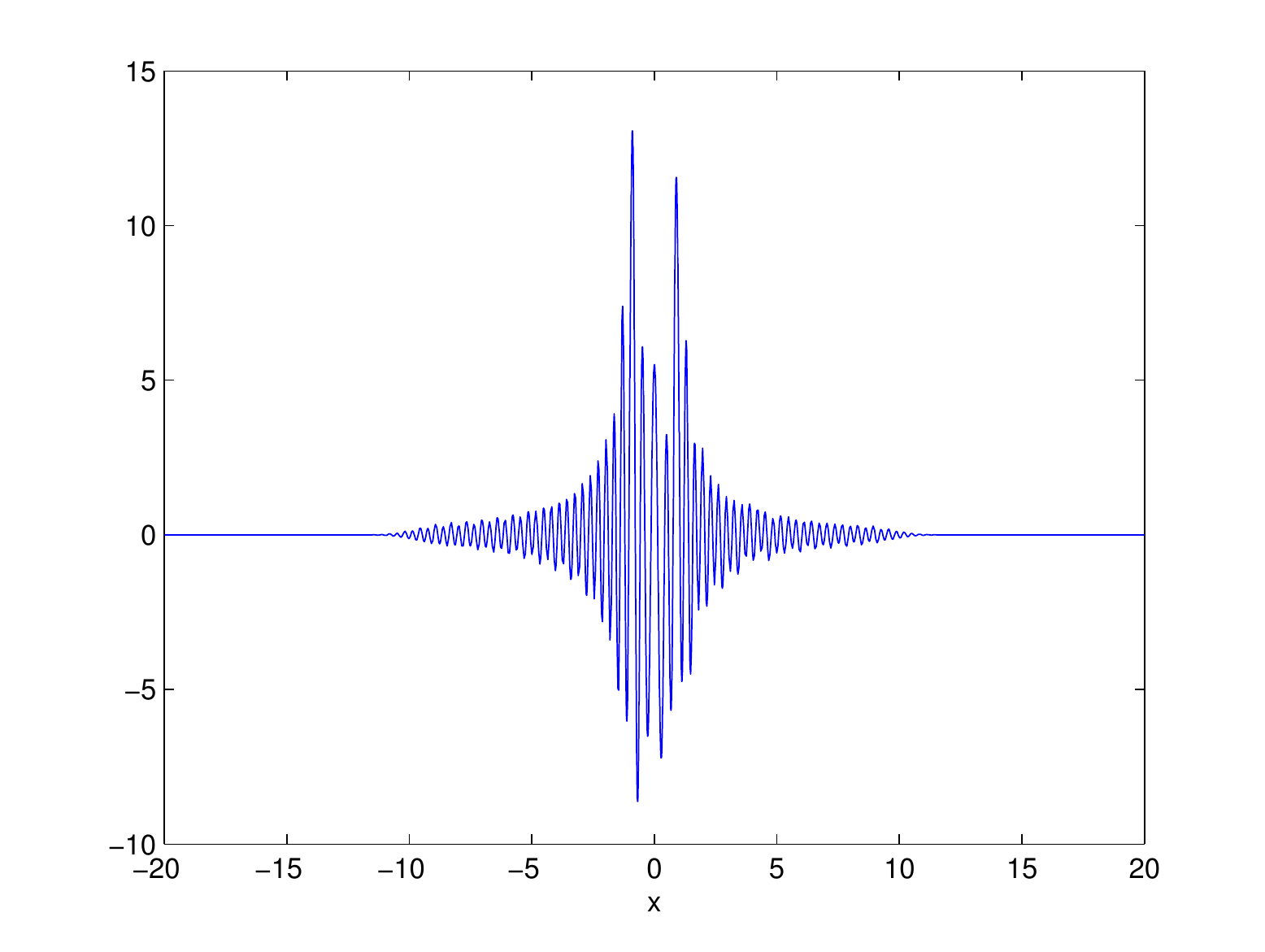}    
    \caption{Real part of the charge density $\sigma$.}
    \label{fig_dielec2c}
  \end{subfigure}
  \qquad
  \begin{subfigure}[t]{.4\linewidth}
    \centering
    \includegraphics[width=1\linewidth]{./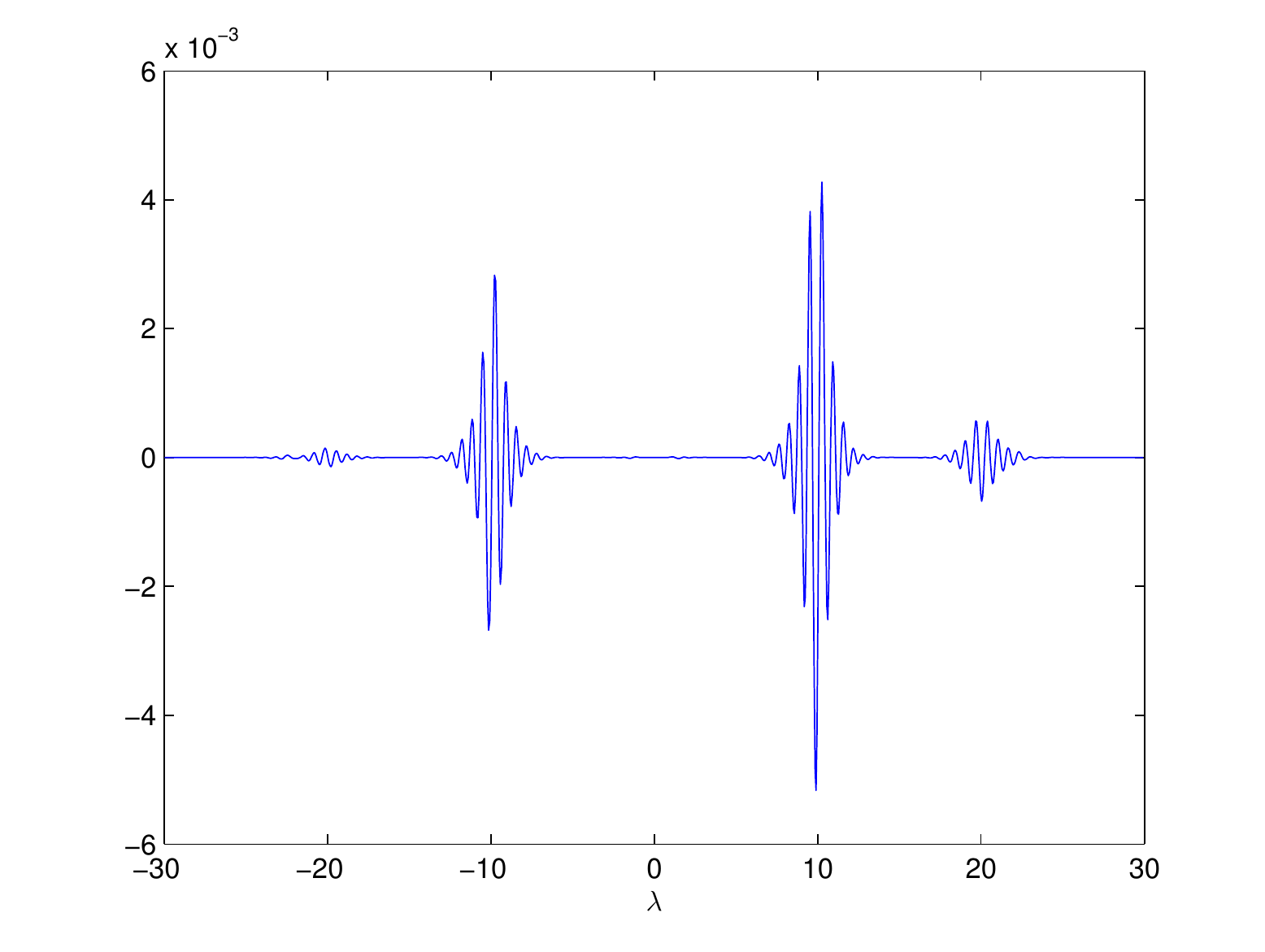}
    \caption{Real part of the Sommerfeld density $\hat{\xi}_1$.}
    \label{fig_dielec2d}
  \end{subfigure}
  \caption{Scattering from an object partially buried across a layered
    media interface.}
  \label{Dielec2}
\end{figure}

\begin{table}[t] 
\begin{center}
    \small
\begin{tabular}{|c|cccccc|}
\hline
Wavenumber $k_1$ & 1 & 1 & 10+$i$ & 10+$i$ & 10 & 10 \\
\hline
Wavenumber $k_2$ & 2 & 5 & 5 & 5+$i$ & 20+5$i$ & 20 \\
\hline
GMRES iter. & 10 & 13 & 16 & 12 &16 & 40 \\
\hline
$\hat{\xi_1}(-30)$  & $7.68$e-14 & $3.78$e-14 & $1.88$e-14 &$1.42$e-14 &$1.41$e-9 &$1.32$e-6 \\
\hline
Error  & $3.47$e-12 & $7.41$e-12 & $1.71$e-13 &$4.07$e-13 &$5.18$e-10 &$6.69$e-10 \\
\hline
\end{tabular}
\end{center}
\caption{Convergence results for scattering from a partially buried object 
in layered media.}
\label{tab:NumericalResult4}
\end{table}

\vspace{.2in}

\noindent
{\bf Acknowledgements:}
We would like to thank Alex Barnett, Charlie Epstein, and Tom Hagstrom 
for several useful discussions.

\bibliographystyle{abbrv}
\bibliography{reference_layer}

\end{document}